\documentclass[a4paper,11pt,reqno]{amsart}
\usepackage[centertags]{amsmath}
\usepackage{amsfonts,amssymb,amsthm} 
\usepackage[dvips]{graphicx} 
\usepackage{psfrag}
\usepackage[english]{babel}
\usepackage{newlfont}
\usepackage{mathrsfs}
\usepackage{color}
\usepackage[body={16cm,22.7cm},centering]{geometry} 
\usepackage[colorlinks,citecolor=blue,urlcolor=blue]{hyperref}
\usepackage{fancyhdr}
\pagestyle{fancy}

\fancyhf{}
\fancyhead[RO,LE]{\footnotesize\thepage}
\fancyhead[LO]{\scriptsize\rightmark}
\fancyhead[RE]{\scriptsize\leftmark}

\setlength{\headheight}{12pt}  
\setlength{\headsep}{25pt} 
%\usepackage[active]{srcltx}
%\usepackage[scrtime]{prelim2e} 
%\usepackage[notref,notcite]{showkeys} 

% THEOREM Environments ---------------------------------------------------

\newtheorem*{mtheorem}{Main Theorem}
\newtheorem*{conjecture}{Conjecture}
\newtheorem{lemma}{Lemma}
\newtheorem{remark}{Remark}
\newtheorem{proposition}{Proposition}
\newtheorem{corollary}{Corollary}

%%%Macro

\newcommand{\e}{\varepsilon}

\newcommand{\R}{\mathbb{R}}

\newcommand{\Ha}{\mathcal{H}}
\newcommand{\Hh}{\mathscr{H}} 
\newcommand{\Os}{\mathcal{E}}
\newcommand{\Sf}{\mathbb{S}}
\newcommand{\F}{\mathcal{F}}
\newcommand{\B}{\mathcal{B}}
\renewcommand{\div}{\mathrm{div}}
\newcommand{\loc}{\mathrm{loc}}
\newcommand{\la}{\langle}
\newcommand{\ra}{\rangle}
\newcommand{\unit}{\omega}
\mathchardef\emptyset="001F

%%%%%%%%%%%%%%%%%%%%%%%%%%%%%%%%%%%%%%%%%%%%%%%%%%%%%%%%%%%%%%%%%%%%%%%%%%%%%%%%%%%%%%%%%%
%%%%%%%%%%%%%%%%%%%%CORPO PRINCIPLE%%%%%%%%%%%%%%%%%%%%%%%%%%%%%%%%%%%%%%%%%%%%%%

\title[Gaussian isoperimetric inequality]
{Sharp dimension free quantitative estimates\\ for the Gaussian isoperimetric inequality}
\author[M. Barchiesi, A. Brancolini, V. Julin]
{Marco Barchiesi, Alessio Brancolini, Vesa Julin}
\address[M. Barchiesi]{Universit\`{a} di Napoli ``Federico II'',
Dipartimento di Matematica e Applicazioni,
Via Cintia, Monte Sant'Angelo, I-80126 Napoli, Italy}
\email{barchies@gmail.com}
\address[A. Brancolini]{University of M\"{u}nster,
Institute for Numerical and Applied Mathematics,
Einsteinstra\ss e 62, D-48149 Germany}
\email{alessio.brancolini@uni-muenster.de}
\address[V. Julin]{University of Jyv\"{a}skyl\"{a},
Department of Mathematics and Statistics,
P.O.Box 35 (MaD) FI-40014, Finland}
\email{vesa.julin@jyu.fi}
\date{\today}

\begin{document}
\maketitle

%%%%%%%%%%%%%%%%%%%%%%%%%%%%%%%%%%%%%%%%%%%%%%%%%%%%%%%%%%%%%%%%%%%%%%%%%%%%%%%%%%%%%%%%%%%%%%
\begin{center}
\begin{minipage}{13cm}
\small{
\noindent {\bf Abstract.} 
We provide a full quantitative version of the Gaussian isoperimetric inequality:
the difference between the Gaussian perimeter of a given set and
a half-space with the same mass controls the gap between the norms of the corresponding 
barycenters. In particular, it controls the Gaussian measure of the symmetric difference 
between the set and the half-space oriented so to have the barycenter
in the same direction of the set.
Our estimate is independent of the dimension, sharp on the decay rate with respect to 
the gap and with optimal dependence on the mass. 

\bigskip
\noindent {\bf 2010 Mathematics Subject Class.} 
49Q20, %Variational problems in a geometric measure-theoretic setting
60E15. %Inequalities; stochastic orderings 
}
\end{minipage}
\end{center}

\bigskip

%%%%%%%%%%%%%%%%%%%%%%%%%%%%%%%%%%%%%%%%%%%%%%%%%%%%%%%%%%%%%%%%%%%%%%%%%%%%%%%%%%%%%%%%%%%%%%%%%%%%%
%\tableofcontents

\section{Introduction}

\noindent
The isoperimetric inequality in Gauss space states that among all sets with a given Gaussian measure 
the half-space has the smallest Gaussian perimeter. 
This result was first proved by Borell~\cite{Bor} and  independently by Sudakov-Tsirelson \cite{SuCi}. 
Since then  many alternative proofs have been proposed, e.g. \cite{BL, Bob, Eh1}, but the issue of completely characterizing  
the extremals was settled only more recently by Carlen-Kerce \cite{CK}, establishing that half-spaces are the unique solutions
to the Gaussian isoperimetric problem.

The natural issue of proving a quantitative version of the isoperimetric inequality 
turns out to be  a much more delicate task. An estimate in terms of  the Fraenkel asymmetry, i.e., the Gaussian measure of the 
symmetric difference between a given set and a half-space, was recently established by Cianchi-Fusco-Maggi-Pratelli \cite{CFMP}.
This result provides the sharp decay rate with respect to the Fraenkel asymmetry but with a non-explicit, 
dimensionally dependent constant. As for the analogous result in the groundbreaking paper in the
Euclidean space  \cite{FMP}, the proof is purely geometric and is 
based on a reflection argument  in order to  reduce the problem to sets which are $(n-1)$-symmetric.  This will cause 
the constant to blow up at least exponentially with respect to the dimension. 
However, the fact that in Gauss space  most geometric and functional inequalities  are  
independent of the dimension suggests that such a quantitative version of the Gaussian isoperimetric inequality should also
be dimension free. This would also be important for possible applications, see \cite{MDO, MN, MN2} and the references therein.
Indeed, after \cite{CFMP},  Mossel-Neeman \cite{MN, MN2} and  Eldan \cite{EL} have provided  quantitative estimates which are 
dimension free but have a sub-optimal decay rate with respect to the Fraenkel asymmetry. 
It is therefore a natural open problem whether a quantitative estimate holds with a sharp
decay rate and, simultaneously, without dimensional dependence.

In this paper we  answer affirmatively to this question. Our result is valid not only for the Fraenkel asymmetry
but for a stronger one  introduced in \cite{EL} which measures the difference of the barycenter of a given set from 
the barycenter of a half-space. 
Our quantitative isoperimetric inequality is completely explicit, and it also has the optimal dependence on the mass. 
The main result is given in terms of the strong asymmetry since in our opinion this is a  more natural way 
to measure the stability of the  Gaussian isoperimetric inequality.  
We will also see that the strong asymmetry appears 
naturally when one considers an asymmetry which we call  the \emph{excess} of the set. This is the Gaussian 
counterpart of the oscillation asymmetry in the Euclidean setting introduced by Fusco and the third author
in \cite{FJ} (see also \cite{BDF, BDS}).

Subsequent to \cite{FMP}, different proofs in the Euclidean case have been given in~\cite{FigMP} 
(by the optimal transport) and in \cite{AFM, CL} (using the regularity theory for minimal surfaces 
and the selection principle). 
Both of these strategies are rather flexible and have been adopted to prove many other geometric inequalities
in a sharp quantitative form. Nevertheless,  they do not 
seem  easily  implementable for our purpose. Indeed, it is not known if the Gaussian  
isoperimetric inequality itself can be retrieved from optimal transport (see \cite{Vi}). 
On the other hand, the approach via selection principle 
is by contradiction. Therefore,  if it may be adapted to the Gaussian setting, 
it cannot be used  as it is to provide explicit information about the constant in the quantitative 
isoperimetric inequality. Finally, the proof in \cite{EL} is based on stochastic calculus and provides sharp 
estimates for the Gaussian noise stability inequality. As a corollary, this gives a quantitative estimate for 
the Gaussian isoperimetric inequality which is, however, not sharp. In order to prove the sharp quantitative 
estimate we introduce a  technique which is based on a direct analysis of the first and the second variation conditions of 
solutions to a suitable minimization problem. This enables us to obtain the  sharp result with a very  
short proof. We will outline the proof  at the end of the Introduction. 

In order to describe the problem  more precisely, we introduce our setting. 
Given a Borel set $E\subset\R^n$, $\gamma(E)$ denotes its \emph{Gaussian measure}, defined as
\begin{equation*}
\gamma(E):=\frac{1}{(2\pi)^\frac{n}{2}}\int_E e^{-\frac{|x|^2}{2}}dx.
\end{equation*}
If $E$ is an open set with Lipschitz boundary, $P_\gamma(E)$ denotes its \emph{Gaussian perimeter}, defined as 
\begin{equation}\label{gauss perim}
P_\gamma(E):=\frac{1}{(2\pi)^\frac{n-1}{2}}\int_{\partial E}e^{-\frac{|x|^2}{2}}d\Ha^{n-1}(x),
\end{equation}
where $\Ha^{n-1}$ is the $(n-1)$-dimensional Hausdorff measure.
Moreover, given $\unit \in\Sf^{n-1}$ and $s\in\R$,  $H_{\unit,s}$ denotes the half-space of the form
\begin{equation*}
H_{\unit,s}:=\{x\in\R^n \text{ : } x\cdot\unit<s\}.
\end{equation*}
We define also the function $\phi:\R\rightarrow(0,1)$ as 
\begin{equation*}
\phi(s):=\frac{1}{\sqrt{2\pi}}\int_{-\infty}^s e^{-\frac{t^2}{2}}dt.
\end{equation*}
Then we  have $\gamma(H_{\unit,s})=\phi(s)$ and $P_\gamma(H_{\unit,s})=e^{-s^2/2}$.
The isoperimetric inequality states that, given an open set $E$ with Lipschitz boundary and mass
$\gamma(E)=\phi(s)$, one has
\begin{equation}\label{iso ine}
P_\gamma(E)\geq e^{-s^2/2},
\end{equation}
and the equality holds if and only if $E=H_{\unit,s}$ for some $\unit \in\Sf^{n-1}$.

A natural question is the stability of the inequality \eqref{iso ine}. Let us denote by $D(E)$
the Gaussian isoperimetric deficit (i.e., the gap between the two sides of the isoperimetric inequality),
\begin{equation*}
D(E):=P_\gamma(E)-e^{-s^2/2},
\end{equation*}
and by $\alpha(E)$ the Fraenkel (or the standard) asymmetry,
\begin{equation*}
\alpha(E):=\min_{\unit\in\Sf^{n-1}}\gamma(E\triangle H_{\unit,s}),
\end{equation*}
where $\triangle$ stands for the symmetric difference between sets. As we mentioned, it is proved in \cite{CFMP} 
that for every set $E \subset \R^n$ with $\gamma(E)= \phi(s)$ the isoperimetric deficit controls the square of 
the Fraenkel asymmetry, i.e.,
\begin{equation}\label{stato arte}
\alpha(E)^2\leq c(n,s)D(E),
\end{equation}
and  the exponent $2$ on the left-hand side is sharp.  
On the other hand, in \cite{MN2} a similar estimate  is proved (for $s=0$), with
a sub-optimal exponent  but with a constant independent of the dimension.
The following natural conjecture is stated explicitly in \cite[Conjecture 1.8]{MN2}
(see also \cite[Open problem 6.1]{MN} and the discussion in \cite{EL}).
\begin{conjecture}
Inequality \eqref{stato arte} holds for a constant $c(s)$ depending only on the mass $s$.  
\end{conjecture}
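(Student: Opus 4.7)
The target is a quadratic quantitative estimate with constant depending only on $s$. Since any symmetrization or reflection argument would introduce an exponential-in-$n$ factor, the strategy has to be purely variational: read off the estimate from the first and second variation conditions of minimizers of a well-chosen penalized functional. In the spirit of the selection principle, but without arguing by contradiction, fix $m=\phi(s)$ and, for $\e>0$ and $\Lambda>0$ large, consider
\begin{equation*}
\mathcal{F}_{\e,\Lambda}(E) \;:=\; P_\gamma(E) + \Lambda\,(\gamma(E)-m)^2 - \e\,\mathsf{A}(E),
\end{equation*}
where $\mathsf{A}$ is a smooth, rotation-invariant surrogate for the squared asymmetry built from the barycenter $b(E):=\int_E x\,d\gamma$, for instance $\mathsf A(E):=\bigl(|b(E)|-|b(H_{\unit,s})|\bigr)^2$. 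This $\mathsf A$ is dimension-free by construction and dominates a power of the Fraenkel asymmetry $\alpha(E)$. Existence of a minimizer $E_\e$ is standard, and $C^{1,\alpha}$-regularity of $\partial E_\e$ off a small singular set follows from the usual almost-minimizer theory carried through the Gaussian weight.

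The heart of the proof is the Euler--Lagrange analysis of $E_\e$. On the regular part of $\partial E_\e$ the first variation yields
\begin{equation*}
H(x) - x\cdot\nu(x) \;=\; \lambda + \e\,\mu\cdot\nu(x),
\end{equation*}
with $\lambda\in\R$ the mass multiplier and $\mu\in\R^n$ arising from the differential of $\mathsf A$. The second variation reads
\begin{equation*}
\int_{\partial E_\e}\!\bigl(|\nabla_\tau\varphi|^2-(|II|^2+1)\varphi^2\bigr)e^{-|x|^2/2}\,d\Ha^{n-1} \;\geq\; -\e\,Q[\varphi]
\end{equation*}
for every $\varphi$ satisfying the linearized mass and barycenter constraints. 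The dimension-free character of the forthcoming estimate is forced by testing with affine Jacobi fields $\varphi_v(x):=v\cdot\nu(x)$ for a single well-chosen constant vector $v\in\R^n$: integration by parts against the Ornstein--Uhlenbeck structure reduces both sides to scalar quantities involving only $\gamma(E_\e)$, $P_\gamma(E_\e)$ and $b(E_\e)$, none of which sees the ambient dimension.

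Choosing $v$ along the direction of $b(E_\e)$, and correcting $\varphi_v$ by an additive constant to satisfy the linearized mass constraint, turns the second-variation inequality into a coercivity estimate of the form $\mathsf A(E_\e)\leq C(s)\,D(E_\e)$ uniformly in $n$. Letting $\e\to 0$ transfers the estimate to arbitrary competitors with mass $m$, and combining with the elementary comparison between barycentric asymmetry and Fraenkel asymmetry yields the conjectured $\alpha(E)^2\leq c(s)\,D(E)$. The main obstacle I anticipate is the choice of test field in the second variation: one needs a single affine $\varphi_v$ that is admissible under both the mass and barycenter linearized constraints, produces a non-degenerate quadratic form in the barycentric gap, and still gives the \emph{sharp} exponent $2$ rather than $2+\eta$; a rank-one correction of $\varphi_v$ should do, but the numerics must be tuned with care. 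Secondary technical points are the correct $s$-dependence of $c(s)$ through the Gaussian surface density $e^{-s^2/2}$, and the rigorous justification of the second variation at the level of almost-minimizers of $\mathcal{F}_{\e,\Lambda}$.
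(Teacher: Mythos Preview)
Your overall architecture---penalized functional, first and second variation, test with components of the normal---is on target, but two choices in your proposal would derail the argument.

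First, the test direction. You propose $\varphi_v=v\cdot\nu$ with $v$ \emph{along} $b(E_\e)$, corrected by a constant. The decisive move in the paper is the opposite: one tests with $\nu_j=\la\nu,e^{(j)}\ra$ for each direction $e^{(j)}$ \emph{orthogonal} to $b(E)$. These $\nu_j$ automatically have zero Gaussian average on $\partial^*E$ (by the divergence theorem, $\int\nu_j\,d\Ha^{n-1}_\gamma=-\la b(E),e^{(j)}\ra=0$), so no constant correction is needed, and the aim is not a coercivity inequality but the rigidity statement $\nu_j\equiv 0$ for all $j\neq n$, i.e.\ the minimizer is one-dimensional. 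Testing in the direction of $b(E)$ gives no such rigidity.

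Second, and more seriously, you are missing the mechanism that turns the second variation into an equality. The paper \emph{differentiates the Euler equation} tangentially in the $e^{(j)}$ direction and combines this with the geometric identity $\Delta_\tau\nu_j=-|B_E|^2\nu_j+\delta_j\Hh$ to obtain, after integration by parts,
\[
\int_{\partial^*E}\bigl(|D_\tau\nu_j|^2-|B_E|^2\nu_j^2+\e|b|\,\nu_n\nu_j^2\bigr)\,d\Ha^{n-1}_\gamma=0.
\]
This matches the second-variation inequality term by term except for a residual $-\tfrac12\int\nu_j^2$, forcing $\nu_j\equiv0$. Without this exact cancellation you cannot close the argument; a single test function plugged into the second variation just gives an inequality with uncontrolled curvature terms.

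A final structural point: the paper \emph{adds} $\tfrac{\e}{2}|b(E)|^2$ to the perimeter rather than subtracting an asymmetry. Since half-spaces \emph{maximize} $|b(E)|$ under the volume constraint, this penalizes half-spaces; showing they nevertheless minimize $\mathcal{F}$ for some fixed $\e=\e(s)>0$ immediately yields $D(E)\geq\tfrac{\e}{2}(b_s^2-|b(E)|^2)\geq c(s)\beta(E)$ for every $E$, with no $\e\to0$ limit needed. Your subtractive functional would not give the estimate so directly.
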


In \cite{EL} Eldan introduces  a new asymmetry which is equivalent to 
\begin{equation}\label{stronger}
\beta(E):=\min_{\unit\in\Sf^{n-1}}\big|b(E)-b(H_{\unit,s})\big|,
\end{equation}
where 
\begin{equation*}
b(E):=\int_E x \, d\gamma(x)
\end{equation*} 
is the (non-renormalized) barycenter of the set $E$,  and $s$ is chosen 
such that $\gamma(E)=\phi(s)$. We call this \emph{strong asymmetry} since it controls the standard one as 
(see Proposition \ref{standard vs strong})
\begin{equation} \label{from prop 4}
\beta(E)\geq  \frac{e^{\frac{s^2}{2}}}{4} \,\alpha(E)^2.
\end{equation}
In \cite[Corollary 5]{EL} it is proved that
\begin{equation}\label{EL log}
\beta(E) \big| \log \beta(E)  \big|^{-1} \leq c(s)D(E)
\end{equation}
for an inexplicit constant $c(s)$ depending only on $s$. Together with \eqref{from prop 4}, this proves the
conjecture  up to a logarithmic factor. Estimate \eqref{EL log} is derived by the so-called robustness estimate for 
the Gaussian noise stability, where the presence of the logarithmic term cannot be avoided
(see \cite[Theorem 2 and discussion in Section 1.1]{EL}). 

\vspace{4pt}
In this paper we fully prove the conjecture. In fact, we prove an even  stronger result, since we provide the optimal 
quantitative estimate in terms of the strong asymmetry. Our main result reads as follows.
\begin{mtheorem}
There exists an absolute constant $c$ such that for every $s \in \R$ and for every set $E\subset\R^n$  with $\gamma(E)=\phi(s)$ 
the following estimate holds:
\begin{equation}\label{largo gente}
\beta(E)\leq c\,(1+s^2) D(E).
\end{equation}
\end{mtheorem}

\vspace{4pt}
In Remark \ref{optimal_mass} we show that the dependence on the mass is optimal.
This can be seen by comparing a one-dimensional interval $(-\infty,s)$ with  a union of two intervals 
$(-\infty, -a) \cup (a , \infty)$ with the same Gaussian length. 
Concerning the numerical value of the constant $c$, we show that we may consider
\begin{equation*}
c=12\sqrt{2\pi},
\end{equation*}
 which  is not optimal. From \eqref{from prop 4} and \eqref{largo gente} we immediately
conclude that for every set  $E\subset\R^n$  with $\gamma(E)=\phi(s)$ the following improvement of \eqref{stato arte} holds:
\begin{equation*}
\alpha(E)^2 \leq 4c\,(1+s^2)e^{-\frac{s^2}{2}}\,   D(E).
\end{equation*}
Finally, since the decay rate with respect to the Fraenkel asymmetry in \eqref{stato arte} is sharp this implies
that also the linear dependence on $\beta(E)$ in \eqref{largo gente} is sharp.

We may state the result of the Main Theorem in a more geometrical way. 
Define for a given (sufficiently regular) set $E$ its \emph{excess} as
\begin{equation*}
\Os(E):=\min_{\unit \in\Sf^{n-1}}\left\{
\frac{1}{(2\pi)^{\frac{n-1}{2}}}\int_{\partial E}|\nu^E- \unit|^2 e^{-\frac{|x|^2}{2}} \, d\Ha^{n-1}(x)\right\},
\end{equation*}
where $\nu^E$ is the exterior normal of $E$.  In Corollary \ref{osci2} at the end of Section \ref{quanti est} 
we show that for every set $E$ it holds
\begin{equation*}
\Os(E) =2D(E)+2\sqrt{2\pi}\beta(E).
\end{equation*}
Therefore by the Main Theorem we conclude that the deficit controls also the excess of the set.
Roughly speaking, this means  that the closer the perimeter of $E$ is to the perimeter of half-space, 
the flatter its boundary has to be. This is the Gaussian counterpart of the result in \cite{FJ} for the Euclidean case,
and it highlights the importance of the strong asymmetry.

As we already mentioned, the proof of the  Main Theorem is based on a direct variational method. 
The idea is to write the inequality \eqref{largo gente} as a  minimization problem 
\begin{equation*}
\min \big\{ P_\gamma(E)+  \e |b(E)|^2  \, : \,\, \gamma(E) = \phi(s) \}
\end{equation*}
and deduce directly from  the first  and the second variation conditions that when $\e>0$ is small enough the 
only solutions are half-spaces. It is not difficult to see that this  is equivalent to the statement of the Main Theorem.  
In Section \ref{sect min} we study the regularity of the solutions to the above problem, derive the Euler equation 
(i.e., the first variation is zero) and  the second variation condition. 
In Section \ref{quanti est} we give the proof of the Main Theorem. 
The key point of the proof is a careful choice of test functions in the second variation condition,
which permits to conclude directly that when $\e$ is sufficiently small every minimizer is a union of parallel stripes.
Since this is true in every dimension and the choice of $\e$ does not depend on $n$, this argument reduces the problem 
to the one-dimensional case. We give a more detailed overview of the proof in Section \ref{idea}. 
Finally, we would like to mention recent works \cite{MR, Ro} where the authors  use the  second variation condition 
to study isoperimetric inequalities in Gauss space.

%%%%%%%%%%%%%%%%%%%%%%%%%%%%%%%%%%%%%%%%%%%%%%%%%%%%%%%%%%%%%%%%%%%%%%%%%%%%%%%%%%%%%%%%%%%%%%%%%%%%%%%%%%%%%%%%%%%%%%%%
\section{Notation and preliminaries}\label{notation}

\noindent
In this section we briefly introduce our basic notation and recall some elementary results from geometric measure theory. 
For an introduction to the theory of sets of finite perimeter we refer to \cite{AFP} and \cite{Ma}.  

We denote by $\{e^{(1)},\ldots,e^{(n)}\}$  the canonical base of $\R^n$. For  generic point $x \in \R^n$ we denote its  
$j$-component by $x_j := \la x, e^{(j)} \ra$ and  use the notation $x =(x',x_n)$ when we want to specify the 
last component. Throughout the paper  $B_R(x)$ denotes the open ball centered at $x$   with  radius $R$.
When the ball  is centered at the origin we simply  write $B_R$.
The family of the Borel sets in $\R^n$ is denoted by $\B$ . We denote the $(n-1)$-dimensional
Hausdorff measure with Gaussian weight by $\Ha^{n-1}_\gamma$, i.e.,  for every set $A \in \B$ we define
\begin{equation*}
\Ha^{n-1}_\gamma(A):=\frac{1}{(2\pi)^\frac{n-1}{2}}\int_{A}e^{-\frac{|x|^2}{2}}d\Ha^{n-1}(x).
\end{equation*}

A set $E \in \B$ has \emph{locally finite perimeter} if $\chi_E\in BV_{\loc}(\R^n)$, i.e.,
for every ball $B_R \subset \R^n$ it holds
\[
\sup \Big\{ \int_E \div \varphi\, dx \,\, : \,\, \varphi \in C_0^\infty(B_R; \R^n), \, \sup|\varphi|\leq 1 \Big\} < \infty.
\]
If $E$ is a set of  locally finite perimeter, we define the \emph{reduced boundary}
$\partial^*E$ of $E$ as the set of all points $x\in\R^n$ such that
\begin{equation*}
\nu^E(x):= -\lim_{r\rightarrow0^+}\frac{D\chi_E(B_r(x))}{|D\chi_E|(B_r(x))} 
\quad\text{ exists and belongs to } \Sf^{n-1}.
\end{equation*}
The reduced boundary $\partial^*E$ is a subset of the topological boundary $\partial E$ and coincides, up to a 
$\Ha^{n-1}$-negligible set, with the support of $D\chi_E$. When $E$ is an open set with Lipschitz boundary then 
$\Ha^{n-1}(\partial E \Delta \partial^*E) = 0$ \cite[Example 12.6]{Ma}.
We shall refer to the vector $\nu^E(x)$ as the \emph{(generalized) exterior normal}
at $x \in \partial^* E$. For more information we refer to \cite[Definition 3.54]{AFP}.
When no confusion arises we shall simply write $\nu$ and use the notation 
$\nu_j =  \la \nu, e^{(j)} \ra$.  
If $E$ has locally finite perimeter, then its perimeter in  $A \in \B$ is
\[
P(E; A) := \Ha^{n-1}(\partial^* E \cap A).
\]
Moreover,  by the divergence theorem we have
\[
\int_E \div X\, dx = \int_{\partial^* E} \la X, \nu^E \ra\, d\Ha^{n-1}(x)
\] 
for every Lipschitz continuous vector field $X: \R^n \to \R^n$ with compact support. 

In \eqref{iso ine} the Gaussian isoperimetric problem was stated for sets with Lipschitz boundary, but this can be extended
to  more general and more natural class of sets.  Indeed,  if $E \in \B$  is a set of locally finite perimeter with 
$\Ha^{n-1}_\gamma(\partial^* E)< \infty$, then it has \emph{finite Gaussian perimeter} and we denote its Gaussian perimeter by
\[
P_\gamma(E) := \Ha^{n-1}_\gamma(\partial^* E).
\]  
Otherwise we set $P_\gamma(E) := \infty$. It follows from  the divergence theorem that
\begin{equation}\label{g perimeter}
P_\gamma(E)  = \sqrt{2 \pi}\sup \Big\{ \int_E (\div \varphi - \la \varphi , x \ra)\, d\gamma(x) \,\, : \,\, \varphi \in C_0^\infty(B_R; \R^n), \, \sup|\varphi|\leq 1 \Big\}
\end{equation}
for every  $E \in \B$. If not otherwise specified, throughout we assume that every set has finite Gaussian perimeter.
The above notion of Gaussian perimeter provides an extension of~\eqref{gauss perim}
because, if $E$ is an open set with Lipschitz boundary, then $\partial E$ and $\partial^*E$ coincide up to 
a $\Ha^{n-1}$-negligible set.

We recall  some notation for calculus on smooth hypersurfaces  (see \cite[Section 11.3]{Ma}).
Let us fix a set $E \subset \R^n$ and assume that there is an open set $U \subset \R^n$ such that 
$M= \partial E \cap U$ is a $C^\infty$ hypersurface.
Assume that we have a vector field $X \in C^\infty(M;\R^n)$. Since the manifold $M$ is smooth we may 
extend $X$ to  $U$ so that  $X \in C^\infty(U;\R^n)$. We define the tangential differential of $X$ on $M$ by
\[
D_\tau X(x) := DX(x) - (DX(x) \nu^E(x)) \otimes \nu^E(x) \qquad  x \in M,
\] 
where $\otimes$  denotes the tensor product.  It is clear that $D_\tau X$ depends only on the values of 
$X$ at $M$, not on the chosen extension. The tangential divergence of $X$ on $M$ is defined by
\[
\div_\tau X := \text{Trace}(D_\tau X) =  \div X - \la DX \nu^E, \nu^E \ra.
\]
Similarly, given  a function $u \in C^\infty(M)$  we extend it to $U$ and define its tangential gradient by
\[
D_\tau u := D u  -  \la Du, \nu^E \ra\, \nu^E.
\]
We define the tangential derivative of $u$ in direction $e^{(i)}$ as
\[
\delta_i u := \la D_\tau u,  e^{(i)} \ra =  \partial_{x_i} u - \la \nabla u, \nu \ra \nu_i.
\]
The tangential Laplacian of $u$ on $M$ is
\[
\Delta_\tau u := \div_\tau (D_\tau u) = \sum_{i=1}^n \delta_i ( \delta_i u).
\]
Since $M$ is smooth, the  exterior normal is a smooth vector field $\nu^E  \in C^\infty(M;\R^n)$. 
Then the sum $\Hh(x)$ of the principal curvatures at $x \in M$ is given by
\[
\Hh(x)= \div_\tau (\nu^E(x)). 
\]
We denote by $|B_E|^2$  the sum of the squares of the principal curvatures, which can be written as
\[
|B_E|^2 = \text{Trace}(D_\tau \nu^E D_\tau \nu^E  ) = \sum_{i,j=1}^n (\delta_i \nu_j)^2.
\]
Note that  $D_\tau \nu^E$ is symmetric, i.e., $\delta_i \nu_j= \delta_j \nu_i$ (see \cite[formula (10.11)]{Giusti}).  
Finally, the Gauss-Green theorem, or the divergence theorem, on hypersurfaces  states that for every 
$X \in C_0^\infty(M; \R^n)$ it holds
\[
\int_M \div_\tau X\, d\Ha^{n-1}(x) = \int_M \Hh \la X, \nu^E \ra \, d\Ha^{n-1}(x).
\]

%%%%%%%%%%%%%%%%%%%%%%%%%%%%%%%%%%%%%%%%%%%%%%%%%%%%%%%%%%%%%%%%%%%%%%%%%%%%%%%%%%%%%%%%%%%%%%%%%%%%%%%%%%%%%%%%%%%%%%%%%%%%
\section{Overview of the proof}\label{idea}

\noindent
As we wrote in the Introduction, we will derive our main estimate \eqref{largo gente} by a suitable minimization problem.
To this aim, given $\e>0$ and $s\leq0$, we consider the functional
\begin{equation*}
\mathcal{F} (E) =  P_\gamma(E)+\e\sqrt{\pi/2}\,|b(E)|^2, \qquad \gamma(E)= \phi(s).
\end{equation*}
In fact, in the proof we replace the volume constraint by a volume penalization, but this is of little importance.  
For simplicity, we will indicate by $b_s$ the norm of the barycenter $b(H_{\unit,s})$, since it does not depend on $\unit$. 
We have $b(H_{\unit,s})=-b_s \unit$ and $b_s=e^{-\frac{s^2}{2}}/\sqrt{2\pi}$.
It is important to observe that the half-spaces maximize the norm of the barycenter, 
\begin{equation}\label{b_s}
b_s\geq|b(E)|
\end{equation}
for every set $E$ such that $\gamma(E)=\phi(s)$. 
Indeed, if $b(E)\neq0$, by taking $\unit=-b(E)/|b(E)|$, we have
\begin{equation*}\begin{split}
|b(E)|-b_s&=\langle b(E)+b_s \unit,-\unit\rangle
=-\int_E\langle x,\unit\rangle d\gamma(x)+\int_{H_{\unit,s}}\langle x,\unit\rangle d\gamma(x)\\
&=\int_{E\setminus H_{\unit,s}}(\langle x,-\unit\rangle+s) d\gamma(x)+\int_{H_{\unit,s}\setminus E}(\langle x,\unit\rangle-s) d\gamma(x)
\leq 0,
\end{split}\end{equation*}
because the integrands in the last term are both negative. This enlightens the fact that in 
minimizing $\F$ the two terms $P_\gamma(E)$ and $|b(E)|$ are in competition.  
Minimizing $P_\gamma(E)$ means to push the set $E$ at infinity in one direction, so that it becomes 
closer to a half-space. On the other hand, minimizing $|b(E)|$ means to balance the mass of $E$ with respect to the origin. 
We will see, and this is the \emph{main point of our analysis}, that for $\e$ small enough the perimeter term overcomes
the barycenter, and the only  minimizers of $\F$ are the half-spaces $H_{\unit,s}$.

We have observed that  the half-spaces maximize the norm of the barycenter.
When $b(E)\neq 0$, the minimum in \eqref{stronger} is attained by $\unit=-b(E)/|b(E)|$ and with this choice of $\unit$ we have
\begin{equation*}
\beta(E)=|b(E)+b_s\unit|=\left|\left(-|b(E)|+b_s\right)\unit\right|
=b_s-|b(E)|.
\end{equation*}
Therefore the strong asymmetry is nothing else than the gap between the maximum $b_s$ and the norm of $b(E)$.
If we show that for some $\e$ and $\Lambda$ (only depending on $s$) the only minimizers of the functional $\F$ are 
the half-spaces~$H_{\unit,s}$, $\unit\in\Sf^{n-1}$, we are done, since this implies that for every set $E \subset \R^n$  
with $\gamma(E)= \phi(s)$ it holds
\begin{equation}\label{here we are}\begin{split}
D(E)&\geq \e\sqrt{\frac{\pi}{2}}\,\bigl(b_s^2-|b(E)|^2\bigr)=\e\sqrt{\frac{\pi}{2}}\,\bigl(b_s+|b(E)|\bigr)\beta(E)\\
&\geq\frac{\e}{2}\,e^{-\frac{s^2}{2}}\beta(E).
\end{split}\end{equation}

\vspace{6pt}
Since the proof  involves many technicalities, we will carry out a sketch of the argument in order to enlighten the core ideas. 
The proof is  divided in two parts. First we prove  standard results concerning the minimizers of $\F$, such as the existence 
and the regularity of minimizers and derive the Euler equation and the second variation condition.  The existence of a minimizer  
follows directly from a compactness argument using  the lower semicontinuity of the Gaussian perimeter.
The regularity is a consequence of the regularity theory for almost minimizers of the perimeter.

The derivation of the Euler equation is  standard  but we  prefer to  sketch  the argument here. 
Let $E$ be a minimizer of $\F$ and assume that its boundary is a smooth hypersurface. 
Given a  function $\varphi \in C^\infty(\partial  E)$ with zero average, 
$\int_{\partial E}\varphi \, d\Ha^{n-1}_\gamma(x)= 0$, we choose a specific vector field  $X: \R^n \to \R^n$  
such that  $X:= \varphi \nu^E$ on $\partial E$.
Let $\Phi : \R^n \times(-\delta,\delta) \to \R^n$ be the flow associated with $X$, i.e.,   
\begin{equation*}
\frac{\partial}{\partial t} \Phi(x,t) = X( \Phi(x,t) ), \qquad \Phi(x,0) = x.
\end{equation*}
We perturb $E$ through the flow $\Phi$ by defining $E_t := \Phi(E,t)$ for $t\in(-\delta,\delta)$.  
The zero average condition on $\varphi$ guarantees that we may choose $X$ in  such a way that  the flow  preserves the volume 
up to a small error, i.e.,  $\gamma(E_t)= \gamma(E) + o(t^2)$. 
Then the \textbf{first variation} condition for the minimizer 
\begin{equation*}
\frac{\partial}{\partial t}\F(E_t)|_{t=0} = 0
\end{equation*}
leads to the Euler equation
\begin{equation*}
\Hh -\langle x, \nu \rangle + \e \langle b, x \rangle = \lambda \qquad \text{on }\, \partial E,
\end{equation*}
where $b = b(E)$ is the barycenter of $E$, $\nu=\nu^E$ the exterior normal of $\partial E$,
and $\lambda$ is the Lagrange multiplier. Furthermore, the \textbf{second variation} condition for the minimizer
\begin{equation*}
\frac{\partial^2}{\partial t^2}\F(E_t)|_{t=0} \geq 0
\end{equation*}
leads to 
\begin{equation}\label{second variation}
\int_{\partial E} \left(|D_\tau \varphi|^2 - |B_E|^2\varphi^2 - \varphi^2
+ \e \langle b , \nu \rangle \varphi^2\right)\, d\Ha^{n-1}_\gamma(x) 
+ \frac{\e}{\sqrt{2\pi}} \, \Big| \int_{\partial E} \varphi\, x \, d\Ha^{n-1}_\gamma(x) \Big|^2 \geq 0.
\end{equation}

\vspace{4pt}
In the second part we effectively use  the Euler
equation and the second variation condition  to prove  that half spaces are the unique minimizers of $\F$.
Given a minimizer $E$, assume (without loss of generality) that its barycenter is in  direction 
$- e^{(n)}$, i.e.,  $b(E) = -|b|e^{(n)}$.
As we said, we have to show that $E = H_{e^{n},s}$. In order to understand how the profile of the set $E$
varies in the directions perpendicular to $e^{n}$, the \emph{key idea} is to use as $\varphi$
 the functions $\nu_j$, $j\in\{1,\ldots,n-1\}$, where $\nu_j=\la \nu, e^{(j)} \ra$.
We are allowed to do this because $\nu_j$ has zero average (see \eqref{test has zero average}).
From the Euler equation we get 
\begin{equation*}
\Bigl| \int_{\partial E} \nu_j\, x  \, d\Ha^{n-1}_\gamma(x) \Bigl|^2 \leq C\int_{\partial E} \nu_j^2 \, d\Ha^{n-1}_\gamma(x)  
\end{equation*}
for some $C$ depending on $s$ (but not on $n$). Therefore, when $\e$ is small enough 
the second variation condition \eqref{second variation} provides the inequality
\begin{equation}\label{piece 1}
\int_{\partial E} \Bigl(|D_\tau \nu_j|^2 - |B_E|^2\nu_j^2  + \e  |b|  \nu_n \nu_j^2 - \frac{1}{2}\nu_j^2
\Bigr)\, d\Ha^{n-1}_\gamma(x) \geq 0. 
\end{equation}

Let  $\delta_j$ be the tangential derivative in $e^{(j)}$-direction and $\Delta_\tau $ the tangential Laplacian. 
By differentiating the Euler equation with respect to $\delta_j$ and by using the geometric equality
\begin{equation*}
\Delta_\tau \nu_j = -  |B_E|^2\nu_j + \delta_j \Hh \qquad \text{on }\, \partial E
\end{equation*}
we deduce 
\begin{equation*}
\Delta_\tau \nu_j - \la D_\tau \nu_j, x \ra= -  |B_E|^2\nu_j  - \e |b | \nu_n \nu_j  \qquad \text{on }\, \partial E.
\end{equation*}
We multiply the above equation by $\nu_j$, integrate it over $\partial E$ and use the divergence theorem on hypersurfaces to get
\begin{equation}\label{piece 2}
\int_{\partial E} \Bigl(|D_\tau \nu_j|^2 - |B_E|^2\nu_j^2  + \e  |b|  \nu_n \nu_j^2 \Bigr)\, d\Ha^{n-1}_\gamma(x) =0. 
\end{equation}
By comparing \eqref{piece 1} and \eqref{piece 2} we conclude that necessarily $\nu_j \equiv 0$ on $\partial E$, i.e., 
$E$ is constituted by strips perpendicular to $e^{n}$. To complete the proof  we show that  $\partial E$ is connected, 
which implies that  $E$ is the half-space $H_{e^{n},s}$.

%%%%%%%%%%%%%%%%%%%%%%%%%%%%%%%%%%%%%%%%%%%%%%%%%%%%%%%%%%%%%%%%%%%%%%%%%%%%%%%%%%%%%%%%%%%%%%%%%%%%%%%%%%%%%%%%%%%%%%%%%%%%
\section{Minimization problem}\label{sect min}

\vspace{8pt}

In this section we study  the functional  $\F:\B\to\R^+$ defined by
\begin{equation}\label{functional}
\mathcal{F} (E) =  P_\gamma(E)+ \e\sqrt{\pi/2}\,|b(E)|^2  + \Lambda\sqrt{2\pi}\, \big| \gamma(E) - \phi(s) \big|,
\end{equation}
where $\e>0$, $\Lambda>0$, and $s\leq0$ are given.  The last term is  a volume penalization that forces (for $\Lambda$ 
large enough) the minimizers of $\F$ to have Gaussian measure $\phi(s)$. We first prove the existence of minimizers and 
then study their regularity. We calculate also the Euler equation and the second variation
of $\mathcal{F}$. All these results are nowadays standard, but for the reader's convenience
we prefer to give  the proofs. Specific properties of the minimizers
will be analyzed in the next section, along the proof of our Main Theorem.

\begin{proposition}
The functional $\F$ has a minimizer.
\end{proposition}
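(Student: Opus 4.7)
The plan is to apply the direct method of the calculus of variations: take a minimizing sequence, extract a subsequence converging in $L^1_{\loc}$ via $BV$-compactness, and use lower semicontinuity of each term in $\F$.

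First I would fix a minimizing sequence $\{E_k\}\subset\B$; since $\F(H_{e^{(n)},s})<\infty$, the infimum is finite, so all three terms in $\F(E_k)$ are bounded. In particular $P_\gamma(E_k)\leq M$, and because the Gaussian weight satisfies $e^{-|x|^2/2}\geq e^{-R^2/2}$ on $B_R$, this gives a Euclidean local perimeter bound $P(E_k;B_R)\leq C(n,R)\,M$. Combined with the trivial bound $|E_k\cap B_R|\leq|B_R|$, this yields uniform boundedness in $BV(B_R)$ for every $R$, so a diagonal extraction produces a Borel set $E$ with $\chi_{E_k}\to\chi_E$ in $L^1_{\loc}(\R^n)$.

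Next I would check convergence of the non-perimeter terms. Since $\gamma$ is a probability measure, for any $R$ one has
\[
|\gamma(E_k)-\gamma(E)|\leq \gamma((E_k\triangle E)\cap B_R)+2\gamma(B_R^c),
\]
and the first summand tends to $0$ by $L^1_{\loc}$ convergence while the second is arbitrarily small by taking $R$ large; hence $\gamma(E_k)\to\gamma(E)$. The same splitting, combined with the integrability of $|x|\,e^{-|x|^2/2}$, gives $b(E_k)\to b(E)$, and therefore $|b(E_k)|^2\to|b(E)|^2$ and $|\gamma(E_k)-\phi(s)|\to|\gamma(E)-\phi(s)|$. For the perimeter term I would use the dual representation \eqref{g perimeter}: for each admissible $\varphi\in C_0^\infty(B_R;\R^n)$ with $\sup|\varphi|\leq 1$, the functional $E\mapsto \int_E(\div\varphi-\la\varphi,x\ra)\,d\gamma(x)$ is continuous under $L^1_{\loc}$-convergence (the integrand is bounded and supported in $B_R$), so $P_\gamma$ is the supremum of a family of continuous functionals and hence lower semicontinuous along $\chi_{E_k}\to\chi_E$.

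Putting these ingredients together gives $\F(E)\leq \liminf_k \F(E_k)=\inf\F$, so $E$ is a minimizer. The only point requiring any care is ruling out mass escaping to infinity in the passage to the limit, but this is handled automatically by the finiteness of $\gamma$ together with the fast decay of $|x|\,e^{-|x|^2/2}$, which controls both the mass and the barycenter uniformly in a tail-estimate sense. No compactness issue of geometric flavour arises here because the Gaussian weight itself plays the role of a confinement term.
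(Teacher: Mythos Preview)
Your proof is correct and follows essentially the same approach as the paper: direct method via $BV$-compactness for $L^1_{\loc}$ convergence, lower semicontinuity of $P_\gamma$ from the dual formula \eqref{g perimeter}, and continuity of the barycenter and volume terms via tail estimates. The only minor variation is that the paper bounds the barycenter tail $|\int_{E_h\setminus B_R}x\,d\gamma|$ using the half-space-maximizing property \eqref{b_s}, whereas you use the integrability of $|x|e^{-|x|^2/2}$ directly; both work equally well here.
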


\begin{proof}
Consider a sequence $E_h$ in $\B$ such that
\begin{equation*}
\lim_{h\rightarrow\infty}\F(E_h)=\inf\{\F(F) \colon F\in\B\}.
\end{equation*}
Since for any bounded open set $A\subset\R^n$ one has that $\sup_h P(E_h; A) $ is finite,
the compactness theorem for $BV$ functions (see \cite[Theorem 3.23]{AFP}) ensures the existence
of a Borel set $E\subset\R^n$ such that, up to a subsequence, 
$\chi_{E_h}\rightarrow\chi_E$ strongly in $L^1_{\loc}(\R^n)$. 
Given $R>0$, let $r_h$ and $r$ be such that
\begin{equation*}
\phi(r_h)=\gamma(E_h\setminus B_R)
\;\text{ and }\; \phi(r)=\gamma(\R^n\setminus B_R).
\end{equation*}
From inequality \eqref{b_s} we get
\begin{equation*}
\Bigl|\int_{E_h\setminus B_R}x\,d\gamma(x)\Bigr|
\leq\frac{e^{-\frac{r_h^2}{2}}}{\sqrt{2\pi}}
\leq\frac{e^{-\frac{r^2}{2}}}{\sqrt{2\pi}}.
\end{equation*}
A similar estimate holds also for the set $F\setminus B_R$. Therefore, since
\begin{equation*}
\Bigl|\int_{E_h}x\,d\gamma(x)-\int_E x\,d\gamma(x)\Bigr|
\leq \Bigl|\int_{\R^n}(\chi_{E_h}-\chi_E)\chi_{B_R} x\,d\gamma(x)\Bigr|+\frac{2e^{-\frac{r^2}{2}}}{\sqrt{2\pi}},
\end{equation*}
we have that $b(E)=\lim_{h\rightarrow\infty}b(E_h)$. 
Equation \eqref{g perimeter} implies that the Gaussian perimeter is lower
semicontinuous with respect to $L^1_{\loc}$ convergence of sets, namely
$P_\gamma(E)\leq\liminf_{h\rightarrow\infty}P_\gamma(E_h)$, so that
$\F(E)\leq \F(F)$ for every set $F\in\B$.
\end{proof}

\vspace{8pt}
The regularity of the  minimizers of  $\F$ follows from the regularity theory for  almost minimizers of the perimeter \cite{Ta}. 
From the regularity point of view the advantage of having the strong asymmetry in the functional \eqref{functional} instead of
the standard one is that the minimizers are smooth outside the singular set. The fact that one may gain regularity by 
replacing the standard asymmetry by a stronger one is also observed in a different context in \cite{BDV}.

\begin{proposition}\label{1st_variation}
Let $E$ be a minimizer of $\F$ defined in \eqref{functional}. Then the reduced boundary  $\partial^*E$ is a relatively open, 
smooth hypersurface and satisfies the Euler equation
\begin{equation}\label{euler}
\Hh -\langle x, \nu \rangle + \e \langle b, x \rangle = \lambda \qquad \text{on }\, \partial^*E,
\end{equation}
where $b=b(E)$ and $\nu=\nu^E$.
Here $\lambda$ is the Lagrange multiplier which can be estimated by
\begin{equation*}
|\lambda| \leq \Lambda.
\end{equation*}
The singular part of the boundary $\partial E \setminus \partial^*E$ is empty when $n <8$, 
while for $n \geq 8$ its Hausdorff dimension can be estimated by $\dim_{\Ha}(\partial E \setminus \partial^*E) \leq n-8$.  
\end{proposition}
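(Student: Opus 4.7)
The plan is to recognize $E$ as an almost-minimizer of the Gaussian perimeter and then invoke the regularity theory of Tamanini. More precisely, I would fix a compact set $K \subset \R^n$ and compare $E$ with any competitor $F$ satisfying $E\triangle F \subset \subset B_r(x_0) \subset K$. Since the minimality of $E$ gives
\[
P_\gamma(E) \le P_\gamma(F) + \tfrac{\e}{2}\bigl(|b(F)|^2-|b(E)|^2\bigr) + \Lambda\,\gamma(E\triangle F),
\]
and since $||b(F)|^2-|b(E)|^2|\le (|b(F)|+|b(E)|)\int_{E\triangle F}|x|\,d\gamma \le C_K\,\gamma(B_r(x_0))$, while $\gamma(B_r(x_0))\le C_K r^n$, the Gaussian weight being smooth and bounded above and below on $K$ lets me pass to a Euclidean almost-minimality estimate of the form $P(E;B_r(x_0))\le P(F;B_r(x_0)) + C_K r^n$. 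This is precisely the hypothesis of Tamanini's theorem, which immediately yields that $\partial^*E\cap K$ is a relatively open $C^{1,\alpha}$ hypersurface and that $\dim_\Ha(\partial E\setminus\partial^*E)\le n-8$ (empty for $n<8$).

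Next I would derive the Euler equation by the standard first-variation argument, without any zero-average constraint on the test function. Given $\varphi\in C_0^\infty(\partial^*E)$, I extend $X:=\varphi\nu^E$ to a compactly supported smooth vector field on $\R^n$, let $\Phi_t$ be its flow and set $E_t:=\Phi_t(E)$. Using the standard formulas, I compute
\[
\frac{d}{dt}P_\gamma(E_t)\Big|_{t=0} = \int_{\partial^* E}(\Hh - \langle x,\nu\rangle)\varphi\,d\Ha^{n-1}_\gamma,\quad \frac{d}{dt}\tfrac{\e}{2}|b(E_t)|^2\Big|_{t=0} = \e\int_{\partial^* E}\langle b,x\rangle\varphi\,d\Ha^{n-1}_\gamma,
\]
and $\frac{d}{dt}\gamma(E_t)|_{t=0}=\int_{\partial^* E}\varphi\,d\Ha^{n-1}_\gamma$. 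The minimality inequality $\F(E_t)\ge\F(E)$, evaluated at $t\to 0^\pm$, gives the two-sided bound
\[
\Bigl|\int_{\partial^*E}(\Hh-\langle x,\nu\rangle+\e\langle b,x\rangle)\varphi\,d\Ha^{n-1}_\gamma\Bigr| \le \Lambda\Bigl|\int_{\partial^*E}\varphi\,d\Ha^{n-1}_\gamma\Bigr|,
\]
because the penalization term is Lipschitz in $t$ with constant $\Lambda\,|\frac{d}{dt}\gamma(E_t)|$. A standard functional-analytic argument (use two test functions and eliminate the average) produces a constant $\lambda$ with $|\lambda|\le\Lambda$ such that \eqref{euler} holds weakly on $\partial^*E$.

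Finally, I would bootstrap to $C^\infty$ regularity. Locally writing $\partial^*E$ as a graph $x_n=u(x')$ of a $C^{1,\alpha}$ function, equation \eqref{euler} becomes a quasilinear elliptic equation for $u$ with right-hand side that is smooth in $(x,u,\nabla u)$ (the curvature $\Hh$ being the standard elliptic operator in divergence form, and the lower-order terms $\langle x,\nu\rangle$, $\langle b,x\rangle$, $\lambda$ depending smoothly on $x$ and $\nabla u$). Classical Schauder theory then upgrades $u\in C^{1,\alpha}$ to $C^{2,\alpha}$, and iterated differentiation of the equation gives $u\in C^\infty$.

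The main obstacle is really a bookkeeping one: producing uniform almost-minimality constants that are locally bounded in $x_0$, since the barycenter comparison term $|b(F)|^2-|b(E)|^2$ involves an a priori dependence on $|x_0|$ through the factor $\int_{E\triangle F}|x|\,d\gamma$. This is handled by observing that almost-minimality is a local notion and Tamanini's theorem only requires uniform constants on compact subsets of $\R^n$. Verifying the Lagrange multiplier bound $|\lambda|\le\Lambda$ is the only subtlety in the first variation, and it follows cleanly from the one-sided inequalities combined with the arbitrariness of $\varphi$.
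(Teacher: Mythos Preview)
Your approach is essentially the same as the paper's: almost-minimality $\Rightarrow$ Tamanini $\Rightarrow$ $C^{1,\alpha}$ $\Rightarrow$ weak Euler equation $\Rightarrow$ Schauder $\Rightarrow$ $C^\infty$. Two places where the paper is more careful deserve mention.

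First, the passage from Gaussian to Euclidean almost-minimality is not as immediate as your one line suggests. Bounding the weight above and below on $K$ only yields $P(E;B_r(x_0))\le \tfrac{C}{c}\,P(F;B_r(x_0))+C_K r^n$ with a multiplicative constant $C/c>1$, which is \emph{not} what Tamanini's theorem requires. To get the constant equal to $1$ one uses the Lipschitz continuity of the weight, $|e^{-|x|^2/2}-e^{-|x_0|^2/2}|\le Cr$ on $B_r(x_0)$, and this produces an error of size $Cr\cdot(P(E;B_r)+P(F;B_r))$ that must be absorbed. The paper does this by first deriving the upper density estimate $P(E;B_r(x_0))\le C_0 r^{n-1}$ (comparing $E$ with $E\cup B_r(x_0)$), and only then converting to the Euclidean estimate. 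This step is standard but not omittable.

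Second, your first-variation formula $\frac{d}{dt}P_\gamma(E_t)\big|_{t=0}=\int_{\partial^*E}(\Hh-\langle x,\nu\rangle)\varphi\,d\Ha^{n-1}_\gamma$ presupposes that $\Hh$ exists pointwise, but at that stage $\partial^*E$ is only $C^{1,\alpha}$. The paper writes the first variation in the weak form $\int_{\partial^*E}(\div_\tau X-\langle X,x\rangle)\,d\Ha^{n-1}_\gamma$, obtains the Euler equation weakly, and only then invokes Schauder to get $C^\infty$ and the pointwise equation. This avoids the circularity.

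On the other hand, your derivation of $|\lambda|\le\Lambda$ directly from the two-sided inequality
\[
\Bigl|\int_{\partial^*E}(\Hh-\langle x,\nu\rangle+\e\langle b,x\rangle)\varphi\,d\Ha^{n-1}_\gamma\Bigr|\le\Lambda\Bigl|\int_{\partial^*E}\varphi\,d\Ha^{n-1}_\gamma\Bigr|
\]
is a clean streamlining: the paper instead first extracts $\lambda$ via the standard two-test-function argument for zero-average perturbations, and then bounds $|\lambda|$ in a separate step using arbitrary vector fields.
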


\begin{proof}
First of all, we note  that $\partial E$ is the topological boundary of a properly chosen representative of the set
(see \cite[Proposition~12.19]{Ma}). 

Let us fix $x_0 \in \partial E$ and $r \in (0,1)$. From the minimality we deduce that for every set 
$F \subset \R^n$ with locally finite perimeter such that $F \triangle E \subset B_{2r}(x_0)$ it holds 
\begin{equation} \label{almost_min1}
P_\gamma(E) \leq P_\gamma(F) + C\gamma(F \triangle E )
\end {equation}
for some constant $C$ depending on $|x_0|$. If we choose $F = E \cup B_r(x_0)$ we get from \eqref{almost_min1} that
\begin{equation*}
P_\gamma(E) \leq P_\gamma(E  \cup   B_r(x_0)) + C\gamma(B_r(x_0)).
\end{equation*}
On the other hand, arguing  as in  \cite[Lemma 12.22]{Ma} we obtain 
\begin{equation*}
P_\gamma(E \cup B_r(x_0)) + P_\gamma(E \cap B_r(x_0)) \leq P_\gamma(E) + P_\gamma(B_r(x_0)).
\end{equation*}
The previous two inequalities yield
\begin{equation*}
P_\gamma(E \cap B_r(x_0))  \leq  P_\gamma(B_r(x_0)) + C\gamma(B_r(x_0)) \leq Cr^{n-1}.
\end{equation*}
The left-hand side can be estimated simply by 
\begin{equation*}
P_\gamma(E \cap B_r(x_0))  \geq ce^{-|x_0|^2} P(E; B_r(x_0)).
\end{equation*}
Therefore we obtain
\begin{equation} \label{upper_est}
P(E; B_r(x_0)) \leq C_0r^{n-1}
\end {equation}
for some constant $C_0 = C_0(|x_0|)$.  Note that for every $x \in B_r(x_0)$ and $r \in (0,1)$ it holds
\begin{equation*}
 \big| e^{-\frac{|x|^2}{2}} - e^{-\frac{|x_0|^2}{2}} \big|  \leq  Cr 
\end{equation*}
for some constant $C$. Therefore,  \eqref{almost_min1} and \eqref{upper_est} imply that for all sets $F$ with 
$F \triangle E \subset \! \subset B_r(x_0)$ and $r \leq 1$ it holds
\begin{equation*}
P(E; B_r(x_0)) \leq P(F; B_r(x_0)) + Cr^n
\end {equation*}
for some constant $C$ depending on $|x_0|$. It follows from \cite[Theorem 1.9]{Ta} (see also \cite[Theorem 21.8]{Ma}) 
that $\partial^*E$ is a relatively open (in $\partial E$) $C^{1,\sigma}$ hypersurface for every $\sigma < 1/2$,
and that the singular set $\partial E \setminus \partial^*E$ is empty when $n < 8$,
while $\dim_{\Ha}(\partial E \setminus \partial^*E) \leq n-8$ when $n \geq 8$. 

Let us next prove that $\partial^* E$ satisfies the Euler equation \eqref{euler}. Since $\partial^* E$ is relatively open we  
find an open set $U \subset \R^n$ such that $\partial E \cap U = \partial^* E$. 
Let us first prove that  for every $X \in C_0^1(U;\R^n)$
with $\int_{\partial^* E}  \langle X, \nu \rangle \, d\Ha^{n-1}_\gamma(x) = 0$ we have
\begin{equation} \label{weak euler 1}
\int_{\partial^* E} \div_\tau X - \langle X, x \rangle\, d\Ha^{n-1}_\gamma(x)
+\e \int_{\partial^* E} \langle b, x \rangle  \langle X, \nu \rangle \, d\Ha^{n-1}_\gamma(x) = 0.
\end{equation}
To this aim let $\Phi : U\times(-\delta,\delta) \to U$ be the flow associated with $X$, i.e., 
\begin{equation*}
\frac{\partial}{\partial t} \Phi(x,t) = X( \Phi(x,t) ), \qquad \Phi(x,0) = x.
\end{equation*}
There exists a time  interval $(-\delta,\delta)$  such that the flow $\Phi$ is defined
in $U\times(-\delta,\delta)$,  it  is $C^1$ regular  and for every $t \in (-\delta,\delta)$  the map $x \mapsto \Phi(x,t)$ 
is a local $C^1$ diffeomorphism  \cite[Theorem 6.1]{Te}. Because $X$ vanishes near the boundary of $U$,   
$\Phi(x,t) = x$ for every point $x$ near $\partial U$.  With this in mind we extend the flow to every 
$t \in (-\delta,\delta)$ and $x \in  \R^n \setminus U$   by $\Phi(x,t) = x$. Then for small values of  $t$ 
the map $x \mapsto \Phi(x,t)$ is a $C^1$ diffeomorphism. We define $E_t := \Phi(E,t)$.
Let us denote  the Jacobian of $\Phi(\cdot, t)$ by $J\Phi(x,t)$ and the  
tangential Jacobian on $\partial^* E$  by $J_\tau \Phi(x,t)$. We recall the formulas (see \cite{StZ})
\begin{equation}\label{taylor volume}
\frac{\partial}{\partial t} \big|_{t=0}  J\Phi(x,t) = \div X \qquad \text{and} \qquad \frac{\partial}{\partial t} \big|_{t=0}J_\tau\Phi(\cdot,t)  = \div_\tau X.
\end{equation}
Note also that by definition $\frac{\partial}{\partial t} \big|_{t=0} \Phi(x,t) = X(x)$ and
$\Phi(x,0)= x$. Then we have by change of variables
\begin{equation}
\label{triviaali}
\begin{split}
\frac{\partial}{\partial t} \big|_{t=0}  \gamma (E_t)  
&=  \frac{\partial}{\partial t} \big|_{t=0} \left( \frac{1}{(2\pi)^{\frac{n}{2}}}\int_{E} e^{-\frac{|\Phi(x,t)|^2}{2}} J \Phi(x,t) \, dx \right)\\
&=  \frac{1}{(2\pi)^{\frac{n}{2}}}\int_{E} ( \div X  - \la X, x\ra)e^{-\frac{|x|^2}{2}} \, dx  \\
&=  \frac{1}{(2\pi)^{\frac{n}{2}}}\int_{E} \div (e^{-\frac{|x|^2}{2}}X) \, dx\\
&=  \frac{1}{\sqrt{2\pi}}\int_{\partial^* E}  \langle X, \nu \rangle \, d\Ha^{n-1}_\gamma(x) = 0.
\end{split}
\end{equation}
This means that  $X$ produces a zero first order volume variation of $E$ and, therefore,
\begin{equation*}
\frac{\partial}{\partial t}|_{t=0} \bigl| \gamma(E_t) - \phi(s)  \bigl| =0.
\end{equation*}
We obtain the  formula \eqref{weak euler 1} by  the minimality of $E$ and by change of variables
\begin{equation*}
\begin{split}
\frac{\partial}{\partial t} \big|_{t=0}   P_\gamma(E_t)  &=  \frac{\partial}{\partial t} \big|_{t=0} 
\left( \int_{\partial^* E} \Bigl( e^{-\frac{|\Phi(x,t)|^2}{2}} J_\tau \Phi(x,t) \Bigr) \, d\Ha^{n-1}(x)\right)  \\
&=\int_{\partial^* E} \div_\tau X - \langle X, x \rangle\, d\Ha^{n-1}_\gamma(x)
\end{split}
\end{equation*}
and 
\begin{equation*}
\begin{split}
\frac{\partial}{\partial t} \big|_{t=0}   | b (E_t)|^2   
&=  \frac{1}{(2\pi)^n}\,\frac{\partial}{\partial t} \big|_{t=0} \Bigl| \int_{E} \Bigl(\Phi(x,t) \, e^{-\frac{|\Phi(x,t)|^2}{2}} J \Phi(x,t) \Bigr) \, dx \Bigl|^2 \\
&= \frac{2}{(2\pi)^{\frac{n}{2}}} \int_{E} \bigl( \la b, X \ra - \la b, x\ra \la X, x\ra+  \la b, x \ra \div X \bigr)e^{-\frac{|x|^2}{2}} \, dx  \\
&= \frac{2}{(2\pi)^{\frac{n}{2}}}  \int_{E} \div \Bigl( \la b, x \ra  e^{-\frac{|x|^2}{2}} \, X \Bigr) dx \\
&= \frac{2}{\sqrt{2\pi}}\int_{\partial^* E } \langle b, x \rangle  \langle X, \nu \rangle  \, d\Ha^{n-1}_\gamma(x). 
\end{split}
\end{equation*}

We use \eqref{weak euler 1} to show that $\partial^* E$ satisfies the Euler equation \eqref{euler} in a weak sense, i.e., 
there exists a number $\lambda \in \R$ such that  for every $X \in C_0^1(U;\R^n)$  we have
\begin{equation} \label{weak euler 2}
\int_{\partial^* E} \div_\tau X - \langle X, x \rangle\, d\Ha^{n-1}_\gamma(x)
+\e \int_{\partial^* E} \langle b, x \rangle  \langle X, \nu \rangle \, d\Ha^{n-1}_\gamma(x) = \lambda  \int_{\partial^* E}\langle X, \nu \rangle \, d\Ha^{n-1}_\gamma(x).
\end{equation}
Let $X_1, X_2   \in C_0^1(U;\R^n)$ be such that $\int_{\partial^* E}  \langle X_i, \nu \rangle \, d\Ha^{n-1}_\gamma(x) \neq 0$, $i =1,2$. Denote $\alpha_1 = \int_{\partial^* E}  \langle X_1, \nu \rangle \, d\Ha^{n-1}_\gamma(x)$ and $\alpha_2 = \int_{\partial^* E}  \langle X_2, \nu \rangle \, d\Ha^{n-1}_\gamma(x)$, and define 
\begin{equation*}
X:= X_1 - \frac{\alpha_1}{\alpha_2} X_2.
\end{equation*}
Then $X \in C_0^1(U;\R^n)$ satisfies  $\int_{\partial^* E}  \langle X, \nu \rangle \, d\Ha^{n-1}_\gamma(x) = 0$ 
and \eqref{weak euler 1} implies
\begin{equation*}
\begin{split}
&\frac{1}{\alpha_1}\left(  \int_{\partial^* E}  \div_\tau X_1 - \langle X_1, x \rangle\, d\Ha^{n-1}_\gamma(x)
+\e \int_{\partial^* E} \langle b, x \rangle  \langle X_1, \nu \rangle \, d\Ha^{n-1}_\gamma(x) \right)\\
&= \frac{1}{\alpha_2}\left(  \int_{\partial^* E}  \div_\tau X_2- \langle X_2, x \rangle\, d\Ha^{n-1}_\gamma(x)
+\e \int_{\partial^* E} \langle b, x \rangle  \langle X_2, \nu \rangle \, d\Ha^{n-1}_\gamma(x) \right).
\end{split}
\end{equation*}
Therefore there exists $\lambda \in \R$ such that \eqref{weak euler 2} holds.

Since the reduced boundary $\partial^*E$ is a $C^{1,\sigma}$ manifold and since  it satisfies the Euler equation \eqref{euler} 
in a weak sense, from classical Schauder estimates we deduce that $\partial^*E$ is in fact a $C^{\infty}$ hypersurface. 
In particular, we conclude  that the Euler equation  \eqref{euler} holds pointwise on $\partial^*E$. 

Finally, in order to bound the Lagrange multiplier $\lambda$,  let  $X \in C_0^1(U; \R^n)$  
be any vector field, and let $\Phi(x,t)$,  $E_t = \Phi(E,t)$ be as above. 
Then by the above calculations we have
\begin{equation*}\begin{split}
\frac{\partial}{\partial t} \big|_{t=0}  \left( P_\gamma(E_t) 
+\e\sqrt{\pi/2}\, |b(E_t)|^2 \right) &= \int_{\partial^* E} \div_\tau X - \langle X, x \rangle
+\e \langle b, x \rangle  \langle X, \nu \rangle  \, d\Ha^{n-1}_\gamma(x)\\
&=\int_{\partial^* E}  (\Hh -\langle x, \nu \rangle + \e \langle b, x \rangle)  \langle X, \nu \rangle  \, d\Ha^{n-1}_\gamma(x)\\
&= \lambda \int_{\partial^* E}   \langle X, \nu \rangle  \, d\Ha^{n-1}_\gamma(x)
\end{split}\end{equation*}
and 
\begin{equation*}
\limsup_{t \to 0} \frac{\big| \gamma(E_t) - \phi(s) \big| - \big| \gamma(E) - \phi(s) \big|}{t} 
\leq \Big| \frac{\partial}{\partial t} \big|_{t=0}   \gamma(E_t) \Big|   
=\frac{1}{\sqrt{2\pi}}\,\big|  \int_{\partial^* E}  \langle X, \nu \rangle \, d\Ha^{n-1}_\gamma(x) \big| .
\end{equation*}
Therefore by the minimality of $E$ we have
\begin{equation*}
\lambda \int_{\partial^* E}   \langle X, \nu \rangle  \, d\Ha^{n-1}_\gamma(x)
+\Lambda  \big|  \int_{\partial^* E}  \langle X, \nu \rangle \, d\Ha^{n-1}_\gamma(x) \big| \geq 0
\end{equation*}
for every $X \in C_0^1(U; \R^n)$. This proves the claim. 
\end{proof}

\vspace{8pt}
Next  we derive the second order condition for minimizers of the functional $\F$, i.e.,  the quadratic form 
associated with the second variation is non-negative. 
Let us briefly explain what we mean by this. Let  $\varphi : \partial^* E \to \R$ be a  smooth function with 
compact support such that it has zero average, i.e.,   $\int_{\partial^* E}\varphi \, d\Ha^{n-1}_\gamma(x)= 0$.   
We choose a specific vector field $X: \R^n \to \R^n$, such that  $X:= \varphi \nu^E$ on $\partial^* E$. 
We denote the associated flow by $\Phi$ and define $E_t := \Phi(E,t)$.  
We note that  since $\varphi$ has zero average then by \eqref{triviaali} $X$ produces a zero
first order volume variation of $E$. This enables us to define $X$ in  such a way that the volume variation 
produced by $X$ is zero up to second order, i.e., $\gamma(E_t) = \gamma(E) + o(t^2)$ (see \eqref{2nd_volume}
and \eqref{zero up to 2nd}). Therefore, under the condition that $\varphi$ has zero average, 
the volume penalization term in the functional $\F$ is negligible.
The second variation of the functional $\F$ at $E$ in the direction $\varphi$ is then defined to be the value
\[
\frac{d^2}{dt^2}\big|_{t=0}  \F(E_t) .
\]
It turns out that the choice of the vector field $X$ ensures that the second derivative exists and it 
follows from the minimality of $E$ that this value is non-negative.  
Moreover, the second variation at $E$ defines a quadratic form over all functions  
$\varphi \in C_0^\infty(\partial^* E)$ with zero average. 

The calculations of the second variation are standard (see \cite{AFM, Ma, MR, StZ} for similar cases) but since they 
are technically challenging we include them for the reader's convenience. We note that since $\partial E$ is not necessarily 
smooth we may only perturb the regular part of the boundary. We write $u \in C_0^\infty(\partial^* E)$ 
when $u: \partial^* E \to \R$ is a smooth function with compact support.

\begin{proposition}\label{2nd_variation}
Let $E$ be a  minimizer of  $\F$. The quadratic form associated with the second variation is non-negative
\begin{equation*}
J[\varphi] := \int_{\partial^* E} \left(|D_\tau \varphi|^2 - |B_E|^2\varphi^2 - \varphi^2
+ \e \langle b , \nu \rangle \varphi^2\right)\, d\Ha^{n-1}_\gamma(x) 
+ \frac{\e}{\sqrt{2\pi}} \, \Big| \int_{\partial^* E} \varphi\, x \, d\Ha^{n-1}_\gamma(x) \Big|^2 \geq 0
\end{equation*}
for every $\varphi \in C_0^\infty(\partial^* E)$ which satisfies 
\begin{equation*}
\int_{\partial^* E} \varphi \, d\Ha^{n-1}_\gamma(x) = 0.
\end{equation*}
Here $b = b(E)$ and $\nu=\nu^E$, while $|B_E|^2$ is the sum of the squares of the curvatures.
\end{proposition}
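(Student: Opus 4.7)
The plan is to use the minimality of $E$ against a carefully chosen \emph{volume-preserving} variation $E_t$ and read off $J[\varphi]\geq 0$ from the second-order Taylor expansion of $\F(E_t)$. Given $\varphi\in C_0^\infty(\partial^* E)$ with $\int_{\partial^* E}\varphi\, d\Ha^{n-1}_\gamma = 0$, I would first extend $\nu^E$ smoothly to a field $N$ on a tubular neighborhood of the support of $\varphi$ and take a smooth compactly supported extension $\bar\varphi$ of $\varphi$. To impose exact volume preservation I would fix an auxiliary $Y\in C_0^\infty(U;\R^n)$ with $\int_{\partial^* E}\langle Y,\nu\rangle\, d\Ha^{n-1}_\gamma\neq 0$, set $X_\eta := \bar\varphi N + \eta\, Y$, denote by $\Phi(\cdot,t)$ the flow of $X_{\eta(t)}$, and use the implicit function theorem together with the zero-average hypothesis to choose $\eta\in C^\infty(\R)$ with $\eta(0)=\eta'(0)=0$ such that $\gamma(E_t)\equiv\gamma(E)$ for $E_t:=\Phi(E,t)$. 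The penalty $\Lambda|\gamma(E_t)-\phi(s)|$ is then constant along the variation, and since $\eta(t)=O(t^2)$ the auxiliary field $Y$ contributes nothing to the second derivative at $t=0$.

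Next, from $\frac{d^2}{dt^2}|_{t=0}[P_\gamma(E_t) + \tfrac{\e}{2}|b(E_t)|^2]\geq 0$ I would expand both terms. Using change of variables together with the expansions \eqref{taylor volume}, a computation parallel to the Euclidean cases (see \cite{AFM, StZ, Ma}) organizes the second variation of the Gaussian perimeter as
\begin{equation*}
\frac{d^2}{dt^2}\Big|_{t=0}P_\gamma(E_t) = \int_{\partial^* E}\bigl(|D_\tau\varphi|^2-|B_E|^2\varphi^2-\varphi^2\bigr)\,d\Ha^{n-1}_\gamma + \int_{\partial^* E}(\Hh-\langle x,\nu\rangle)\,Z\,d\Ha^{n-1}_\gamma,
\end{equation*}
where the extra $-\varphi^2$ is the genuinely Gaussian contribution coming from the second derivative of $e^{-|\Phi|^2/2}$ along $\nu$, and $Z$ collects the remaining second-order divergence and acceleration terms produced by the flow. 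An analogous calculation gives
\begin{equation*}
\frac{d}{dt}\Big|_{t=0} b(E_t) = \int_{\partial^* E}\varphi\, x\, d\Ha^{n-1}_\gamma,
\end{equation*}
and, after one further differentiation,
\begin{equation*}
\tfrac{1}{2}\frac{d^2}{dt^2}\Big|_{t=0}|b(E_t)|^2 = \Big|\int_{\partial^* E}\varphi\, x\, d\Ha^{n-1}_\gamma\Big|^2 + \int_{\partial^* E}\bigl(\langle b,\nu\rangle\varphi^2+\langle b,x\rangle Z\bigr)\,d\Ha^{n-1}_\gamma.
\end{equation*}

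The decisive step is to feed the Euler equation \eqref{euler}, $\Hh-\langle x,\nu\rangle = \lambda - \e\langle b,x\rangle$, into the two $Z$-integrals. The exact identity $\gamma(E_t)\equiv\gamma(E)$ forces $\int_{\partial^* E} Z\, d\Ha^{n-1}_\gamma = 0$ at second order (this is precisely the second-order volume variation, which has been set to zero by the construction of $\eta$), so the constant $\lambda$-piece drops out and the remaining $-\e\langle b,x\rangle Z$ from the perimeter cancels against the $\langle b,x\rangle Z$ from the barycenter. What survives is precisely $J[\varphi]$, and the minimality of $E$ yields $J[\varphi]\geq 0$.

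The main obstacle is the bookkeeping in the second step: correctly identifying the second-order coefficient $Z$ in the Taylor expansions of $J\Phi$, $J_\tau\Phi$ and of the Gaussian weight, keeping track of the sign and the precise $|B_E|^2+\varphi^2$ structure, and verifying that the tangential component of the acceleration $\partial_t^2\Phi|_{t=0}$ disappears once the Euler equation is invoked. This cancellation is the Gaussian analogue of the classical Euclidean fact that, under the first variation condition, only the normal component of the second-order deformation enters the second variation.
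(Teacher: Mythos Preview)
Your strategy matches the paper's: perturb by a normal flow, expand $P_\gamma(E_t)+\tfrac{\e}{2}|b(E_t)|^2$ to second order, and use the Euler equation to kill the remainder. The one substantive difference is how volume preservation is enforced. You use an auxiliary field $Y$ and the implicit function theorem; the paper instead fixes the \emph{extension} of $\varphi$ off $\partial^*E$ by prescribing $\partial_\nu\varphi=(\langle x,\nu\rangle-\Hh)\varphi$ on $\partial^*E$ and setting $X=\varphi\,\nabla d_E$. With that choice the second volume variation $\int_{\partial^*E}\bigl(\varphi\,\partial_\nu\varphi+(\Hh-\langle x,\nu\rangle)\varphi^2\bigr)\,d\Ha^{n-1}_\gamma$ vanishes \emph{identically}, so no correction field is needed and the computation stays with a single autonomous flow. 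Your route is the more flexible standard one (as in \cite{AFM,StZ}); the paper's is slightly shorter because the remainder $Z$ is explicit and the cancellation against $\lambda$ is immediate from the chosen extension.

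One point in your write-up needs care. The assertion ``since $\eta(t)=O(t^2)$ the auxiliary field $Y$ contributes nothing to the second derivative at $t=0$'' is not correct as stated: $Y$ \emph{does} contribute to the acceleration $\partial_t^2\Phi|_{t=0}$ through the term $\eta''(0)\,Y$, and hence to your $Z$. This contribution is exactly what forces $\int_{\partial^*E}Z\,d\Ha^{n-1}_\gamma=0$, which you then invoke. Without it the second volume variation of the bare field $\bar\varphi N$ is generically nonzero, and the $\lambda$-term would not drop out. So either phrase it as ``$Y$ contributes only through the acceleration $Z$, not to the leading quadratic terms,'' or make explicit that $Z$ contains $\eta''(0)\langle Y,\nu\rangle$ and that the identity $\gamma(E_t)\equiv\gamma(E)$ at second order reads $\int Z\,d\Ha^{n-1}_\gamma=0$. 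Once this is clarified your argument closes exactly as the paper's: the two $Z$-integrals combine to $\int(\Hh-\langle x,\nu\rangle+\e\langle b,x\rangle)Z\,d\Ha^{n-1}_\gamma=\lambda\int Z\,d\Ha^{n-1}_\gamma=0$, leaving $J[\varphi]\geq 0$.
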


\begin{proof}
Assume that $\varphi \in C_0^\infty(\partial^* E)$  satisfies $\int_{\partial^* E} \varphi \, d\Ha^{n-1}_\gamma(x) = 0$. 
Let $d_E:\R^n \to \R$ be the signed distance function of $E$
\begin{equation*}
d_E(x) := \begin{cases}
&\text{dist}(x, \partial E)    \quad \text{for  }\,x \in \R^n \setminus  E \\
&-\text{dist}(x,\partial E)    \quad \text{for  }\,x \in E.
\end{cases}
\end{equation*}
It follows from Proposition \ref{1st_variation} that there is an open set $U \subset \R^n$ such that $d_E$ is 
smooth in $U$ and the support of $\varphi$ is in $\partial^* E \cap U$. We extend $\varphi$ to $U$, and call the extension simply 
 by $\varphi$, so that $\varphi \in C_0^\infty(U)$ and 
\begin{equation} \label{2nd_volume}
\partial_\nu \varphi = ( \la x, \nu \ra- \Hh )\varphi \qquad \text{on }\, \partial^* E.
\end{equation}
Finally, we define the vector field $X: \R^n \to \R^n$ by  $X := \varphi \nabla d_E$ in $U$ and $X:= 0$ in $\R^n \setminus U$. 
Note that $X$ is smooth  and $X=  \varphi \nu$ on $\partial^* E$. 

Let $\Phi : \R^n \times (-\delta , \delta) \to\R^n$ be the flow associated with  $X$, i.e., 
\begin{equation*}
\frac{\partial}{\partial t} \Phi(x,t) = X( \Phi(x,t) ), \qquad \Phi(x,0) = x
\end{equation*} 
and define $E_t = \Phi(E,t)$. Let us denote  the Jacobian of $\Phi(\cdot, t)$ by $J\Phi(x,t)$ and the  
tangential Jacobian on $\partial^* E$  by $J_\tau \Phi(x,t)$. We recall the formulas \eqref{taylor volume}
and also (see again \cite{StZ}) the formulas
\begin{equation} \label{taylor perimeter}
\begin{split}
&\frac{\partial^2}{\partial t^2}\big|_{t = 0} J\Phi(x,t)  = \div((\div X)X)\\
&\frac{\partial^2}{\partial t^2}\big|_{t = 0} J_\tau\Phi(\cdot,t) 
=  |(D_\tau X)^T\nu|^2  + (\div_\tau X)^2  + \div_\tau Z - \text{Tr}(D_\tau X)^2
\end{split}
\end{equation}
where $Z:=\frac{\partial^2 \Phi(x,t)}{\partial t^2}\big|_{t = 0}$ is the acceleration field. Recall also that 
by definition $\Phi(x,0)= x$ and $\frac{\partial}{\partial t} \big|_{t=0}\Phi(x,t)= X$.

We begin by differentiating the Gaussian volume. Similarly to \eqref{triviaali}, by a change of variables 
we use \eqref{taylor volume} and \eqref{taylor perimeter} to calculate
\begin{equation*}
\frac{\partial}{\partial t} \big|_{t=0}  \gamma (E_t) = \frac{1}{\sqrt{2\pi}}\int_{\partial^* E}  \varphi \, d\Ha^{n-1}_\gamma(x) = 0
\end{equation*}
and  
\begin{equation} \label{zero up to 2nd}
\begin{split}
\frac{\partial^2}{\partial t^2}\big|_{t = 0}  \gamma (E_t) 
&= \frac{1}{(2\pi)^\frac{n}{2}}\int_E \div \Bigl( \div( X e^{-\frac{|x|^2}{2}}) X \Bigr) \, dx \\
&= \frac{1}{\sqrt{2\pi}}\int_{\partial^* E} \varphi\partial_\nu \varphi + (\Hh - \la x, \nu \ra)\varphi^2 \, d\Ha^{n-1}_\gamma(x) = 0,
\end{split}
\end{equation}
where the last equality comes from \eqref{2nd_volume}.  Hence, $\gamma(E_t) =\gamma(E)+ o(t^2)$ and 
\begin{equation*}
\frac{\partial^2}{\partial t^2}\big|_{t = 0} \big| \gamma(E_t) -\phi(s) \big| = 0.
\end{equation*}
Since $t  \mapsto P_\gamma(E_t)$ and $t  \mapsto |b(E_t)|^2$ are smooth with respect to $t$ we have by the minimality of $E$ that 
\begin{equation} \label{volume_const}
0 \leq  \frac{\partial^2}{\partial t^2}\big|_{t = 0}  \mathcal{F} (E_t) 
=  \frac{\partial^2}{\partial t^2}\big|_{t = 0}P_\gamma (E_t) +\e\sqrt{\pi/2}\,\frac{\partial^2}{\partial t^2}\big|_{t = 0} |b(E_t)|^2.
\end{equation}
Thus we need to differentiate the perimeter and the barycenter. 

To differentiate the perimeter we write
\begin{equation*}
P_\gamma (E_t)  = \int_{\partial^* E} e^{-\frac{|\Phi(x,t)|^2}{2}}\, J_\tau\Phi(x,t) \, d\Ha^{n-1}(x).
\end{equation*}
We differentiate this twice and  use \eqref{taylor perimeter} to get 
\begin{equation}\label{der_perimeter}
\begin{split}
\frac{\partial^2}{\partial t^2}\big|_{t = 0}  P_\gamma (E_t) =& \int_{\partial^* E} \left( |(D_\tau X)^T\nu|^2 
+ (\div_\tau X)^2  + \div_\tau Z - \text{Tr}(D_\tau X)^2 \right) \, d\Ha^{n-1}_\gamma(x)\\
&+\int_{\partial^* E} \left(-2 \div_\tau X \la X,x \ra - \la Z,x \ra - |X|^2 + \la X, x \ra^2\right) \, d\Ha^{n-1}_\gamma(x)\\
=&\int_{\partial^* E} \bigl(|D_\tau \varphi|^2 -|B_E|^2 \varphi^2 -\varphi^2\bigr)  \, d\Ha^{n-1}_\gamma(x)\\
&+\int_{\partial^* E} (\Hh - \la x, \nu \ra ) (\varphi \partial_\nu \varphi
+(\Hh - \la x, \nu \ra )\varphi^2) \, d\Ha^{n-1}_\gamma(x).
\end{split}
\end{equation}

Let us denote $b_t = b(E_t)$, $\dot{b}=\frac{\partial}{\partial t}\big|_{t = 0}  b_t$ 
and $\ddot{b}=\frac{\partial^2}{\partial t^2}\big|_{t = 0}  b_t$. Then 
\begin{equation*}
\frac{\partial^2}{\partial t^2}\big|_{t = 0}  |b_t|^2 = 2 \la b, \ddot{b} \ra  + 2|\dot{b}|^2. 
\end{equation*}
To differentiate the barycenter we write
\begin{equation*}
b_t   =\frac{1}{(2\pi)^\frac{n}{2}}\int_{E} \Phi(x,t) \, e^{-\frac{|\Phi(x,t)|^2}{2}}\, J\Phi(x,t) \, dx.
\end{equation*}
We use \eqref{taylor volume} and \eqref{taylor perimeter}, and get after differentiating once that 
\begin{equation}\label{der_barycenter_1}
\dot{b} =\frac{1}{\sqrt{2\pi}}\int_{\partial^* E} \varphi\, x  \, d\Ha^{n-1}_\gamma(x)
\end{equation}
and after differentiating twice that
\begin{equation*}
\begin{split}
\ddot{b} &= \frac{1}{(2\pi)^\frac{n}{2}}\int_E    \bigl( x\,  \div \left( (\div X) X\right)  
+ 2X (\div X) - 2x \la X, x \ra (\div X) - 2X \la X, x \ra \bigr) \, e^{-\frac{|x|^2}{2}} \, dx  \\
&\,\,\,\,\,\,+  \int_E    \left( (DX)X + x\la X, x \ra^2 - x\la DX X, x \ra - x |X|^2   \right) \, e^{-\frac{|x|^2}{2}} \, dx\\
&= \frac{1}{(2\pi)^\frac{n}{2}}\int_E \Bigl( (DX) X e^{-\frac{|x|^2}{2}}  + 2 X\,  \div (X  e^{-\frac{|x|^2}{2}})  
+ x \, \div \Bigl(\div (X  e^{-\frac{|x|^2}{2}}) X \Bigr)  \Bigr) \, dx.
\end{split}
\end{equation*}
Thus we obtain by the divergence theorem that 
\begin{equation}\label{der_barycenter_2}
\begin{split}
\la b, \ddot{b} \ra  &= \frac{1}{(2\pi)^\frac{n}{2}}\int_{ E}\div  \Bigl( \la X, b \ra  X e^{-\frac{|x|^2}{2}}  \Bigr)   
+\div  \Bigl( \la x, b \ra  \Bigl(\div (X  e^{-\frac{|x|^2}{2}}) X \Bigr) \Bigr) \, dx\\
&=\frac{1}{\sqrt{2\pi}}\int_{\partial^* E}  \la X, b \ra  \la X, \nu \ra\, d \Ha^{n-1}_\gamma(x) 
+\frac{1}{\sqrt{2\pi}}\int_{\partial^* E}  \la b, x  \ra \la X, \nu \ra \Bigl(\div (X  e^{-\frac{|x|^2}{2}}) \Bigr) \, d\Ha^{n-1}(x) \\
&=\frac{1}{\sqrt{2\pi}}\int_{\partial^* E}  \la b, \nu \ra  \varphi^2 \, d\Ha^{n-1}_\gamma(x) 
+\frac{1}{\sqrt{2\pi}}\int_{\partial^* E}  \la b, x \ra ( \varphi\partial_\nu \varphi +(\Hh - \la x, \nu \ra)\varphi^2) \, d\Ha^{n-1}_\gamma(x).
\end{split}
\end{equation}

Therefore  \eqref{volume_const}, \eqref{der_perimeter}, \eqref{der_barycenter_1} and \eqref{der_barycenter_2} imply
\begin{equation}\label{long_calculation}
\begin{split}
0 \leq&  \frac{\partial^2}{\partial t^2}\big|_{t = 0}  \mathcal{F} (E_t) 
=  \frac{\partial^2}{\partial t^2}\big|_{t = 0}    P_\gamma (E_t) +  \e\sqrt{2\pi} \left(  \la b, \ddot{b}  \ra  +  |\dot{b} |^2 \right) \\
=& \int_{\partial^* E}\left( |D_\tau \varphi|^2 -|B_E|^2 \varphi^2 -\varphi^2  
+ \e \la b, \nu \ra  \varphi^2   \right)  \, d\Ha^{n-1}_\gamma(x) 
+\frac{\e}{\sqrt{2\pi}} \, \Big| \int_{\partial^* E}\varphi\, x \, d\Ha^{n-1}_\gamma(x) \Big|^2\\
&+\frac{1}{\sqrt{2\pi}}\int_{\partial^* E} (\Hh - \la x, \nu \ra + \e \la b, x \ra ) (\varphi \partial_\nu \varphi+  (\Hh 
-\la x, \nu \ra )\varphi^2) \, d\Ha^{n-1}_\gamma(x).
\end{split}
\end{equation}
We use the Euler equation \eqref{euler} and  \eqref{2nd_volume}  to conclude that 
\begin{equation*}
\begin{split}
\int_{\partial^* E} &(\Hh - \la x, \nu \ra + \e \la b, x \ra ) (\varphi \partial_\nu \varphi
+(\Hh-\la x, \nu \ra )\varphi^2) \, d\Ha^{n-1}_\gamma(x)\\
&=\lambda \int_{\partial^* E} \varphi \partial_\nu \varphi+  (\Hh - \la x, \nu \ra )\varphi^2 \, d\Ha^{n-1}_\gamma(x) = 0.
\end{split}
\end{equation*}
Hence, the claim follows from \eqref{long_calculation}.
\end{proof}

\vspace{8pt}
We would like to extend the quadratic form in Proposition \ref{2nd_variation} to more general functions 
than $\varphi \in C_0^\infty(\partial^* E)$. To this aim we define the function space $H_\gamma^1(\partial^*E)$ 
as the closure of $C_0^\infty(\partial^*E)$ with respect to the  norm
$||u||_{H_\gamma^1(\partial^*E)} = ||u||_{L_\gamma^2(\partial^*E)} + ||D_\tau u||_{L_\gamma^2(\partial^*E,\R^n)}$. 
Here $L_\gamma^2(\partial^*E)$ is the set of square integrable functions on $\partial^*E$ with respect to
the measure $\gamma$.
A priori the definition of $H_\gamma^1(\partial^*E)$ seems rather restrictive since it is not clear if  even 
constant functions belong to $H_\gamma^1(\partial^*E)$.  However, the information on the singular set  
$\dim_{\Ha}(\partial E \setminus \partial^*E) \leq n-8$ from Proposition \ref{1st_variation}  ensures 
that the singular set  has capacity zero and it is therefore negligible. It follows that  every smooth function 
$u \in  C^\infty(\partial^* E)$ which has finite  $H_\gamma^1$-norm is in  $H_\gamma^1(\partial^*E)$. 
Recall that $\partial^*E$ is a relatively open, $C^\infty$ hypersurface. 
In particular,  if $u : \R^n \to \R$ is a smooth function such that  the   $H_\gamma^1(\partial^*E)$ norm of its restriction 
on $\partial^* E$  is bounded, then the restriction is in  $H_\gamma^1(\partial^*E)$. 

\begin{lemma}\label{sing_cap}
 Let $E$ be a minimizer of $\F$. If $u \in  C^\infty(\partial^* E)$ is such that 
$||u||_{H_\gamma^1(\partial^*E)} <\infty$, then $u \in H_\gamma^1(\partial^*E)$. 
\end{lemma}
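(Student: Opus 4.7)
The plan is to approximate $u$ by a sequence in $C_0^\infty(\partial^* E)$ obtained by simultaneously cutting off at infinity (using the Gaussian decay together with $u \in L_\gamma^2$) and excising a shrinking neighborhood of the singular set $\Sigma := \partial E \setminus \partial^* E$ (using a capacity argument based on the Hausdorff dimension bound $\dim_{\Ha}(\Sigma)\le n-8$).

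\emph{Step 1 (truncation at infinity).} For every $R>0$ choose $\chi_R\in C_0^\infty(\R^n)$ with $\chi_R\equiv 1$ on $B_R$, $\chi_R\equiv 0$ outside $B_{2R}$ and $|\nabla\chi_R|\le C/R$. Since $u,\,D_\tau u\in L_\gamma^2(\partial^*E)$, dominated convergence yields $\chi_R u\to u$ in $L_\gamma^2$ and, using $|D_\tau(\chi_R u)-\chi_R D_\tau u|\le(C/R)|u|$, also $D_\tau(\chi_R u)\to D_\tau u$ in $L_\gamma^2$. Hence it is enough to approximate functions $v\in C^\infty(\partial^*E)$ with finite $H_\gamma^1$-norm and support contained in $\overline{B_{2R}}$ by elements of $C_0^\infty(\partial^*E)$.

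\emph{Step 2 (excision of the singular set).} Fix such a $v$. Since $\dim_{\Ha}(\Sigma)\le n-8<n-3$, we have $\Ha^{n-3}(\Sigma\cap\overline{B_{2R}})=0$. Given $\delta>0$, cover $\Sigma\cap\overline{B_{2R}}$ by countably many balls $B_{r_i}(x_i)$ with $\sum_i r_i^{n-3}<\delta$ and build $\eta_\delta\in C^\infty(\R^n;[0,1])$ equal to $0$ on $\bigcup_i B_{r_i}(x_i)$, equal to $1$ outside $\bigcup_i B_{2r_i}(x_i)$, and satisfying $|\nabla\eta_\delta|\le C/r_i$ on the corresponding annulus. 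Using the uniform perimeter density bound $P(E;B_r(x_0))\le C_0(|x_0|)r^{n-1}$ derived in the proof of Proposition \ref{1st_variation}, we get
\begin{equation*}
\int_{\partial^*E}|D_\tau\eta_\delta|^2\,d\Ha^{n-1}_\gamma
\le C(R)\sum_i \frac{1}{r_i^{\,2}}\cdot r_i^{\,n-1}
= C(R)\sum_i r_i^{\,n-3}\le C(R)\,\delta.
\end{equation*}
Since $\eta_\delta\to 1$ pointwise on $\partial^*E$ as $\delta\to 0$ and $|\eta_\delta v|\le|v|$, $|\eta_\delta D_\tau v|\le|D_\tau v|$, dominated convergence together with the display above gives $\eta_\delta v\to v$ in $H_\gamma^1(\partial^*E)$. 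By construction each $\eta_\delta v$ is smooth, compactly supported in the open set $\partial^*E$, hence lies in $C_0^\infty(\partial^*E)$. A diagonal extraction in $R$ and $\delta$ then produces a sequence in $C_0^\infty(\partial^*E)$ converging to $u$ in $H_\gamma^1(\partial^*E)$.

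The only non-routine step is the capacity estimate in Step 2: one must convert the purely metric information $\dim_{\Ha}(\Sigma)\le n-8$ into an energy bound on the cutoffs, which is where the perimeter upper bound from Proposition \ref{1st_variation} (providing the correct scaling $r^{n-1}$ to pair against $r^{-2}$ from $|\nabla\eta_\delta|^2$) is essential; everything else is a standard application of dominated convergence and Gaussian decay.
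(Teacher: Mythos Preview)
Your approach is essentially identical to the paper's: cut off at infinity, then excise the singular set via a capacity estimate that pairs the $r^{-2}$ from the cutoff gradient against the $r^{n-1}$ perimeter density bound from Proposition~\ref{1st_variation}, using $\Ha^{n-3}(\Sigma)=0$. There is, however, one small gap in Step~2. The display only bounds $\int_{\partial^*E}|D_\tau\eta_\delta|^2\,d\Ha^{n-1}_\gamma$, whereas to conclude $\eta_\delta v\to v$ in $H_\gamma^1$ you need $\int_{\partial^*E}|v|^2|D_\tau\eta_\delta|^2\,d\Ha^{n-1}_\gamma\to 0$. Since $v\in C^\infty(\partial^*E)$ may blow up as one approaches $\Sigma$ (having support in $\overline{B_{2R}}$ does not force boundedness on the noncompact set $\partial^*E\cap\overline{B_{2R}}$), this does not follow from dominated convergence and the display alone. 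The paper handles this with a one-line reduction at the outset: first truncate $u$ so that it is bounded, after which the cross term is controlled by $\|u\|_\infty^2\int|D_\tau\eta_\delta|^2\to 0$. Inserting this truncation step makes your argument complete.
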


\begin{proof}
By truncation we may assume that $u$ is bounded and by a standard mollification argument it is enough to 
find Lipschitz continuous functions $u_k$ with a compact support on $\partial^* E$  such that  
$\lim_{k \to \infty}||u-u_k||_{H_\gamma^1(\partial^*E)} =0$. We will show that there exist  Lipschitz continuous 
functions $\zeta_k: \partial^* E \to \R$ with  compact support such that 
$0\leq \zeta_k \leq 1$, $\zeta_k \to 1$ in $H_\gamma^1(\partial^* E)$ and $\zeta_k(x) \to 1$ pointwise on $\partial^*E$. 
We may then choose $u_k = u \zeta_k$ and the claim follows.

Let us fix $k \in \mathbb{N}$. First of all let us choose a large radius  $R_k$ such that the Gaussian perimeter of $E$ 
outside the ball $B_{R_k}$ is small, i.e., $P_\gamma(E; \R^n \setminus B_{R_k}) \leq 1/k$. We choose a cut-off function 
$\eta_k \in C_0^\infty(B_{2R_k})$ such that $|D\eta_k (x)|\leq 1$ for every $x \in \R^n$ and $\zeta \equiv 1$ in $B_{R_k}$. 

Denote  the singular set by $\Sigma := \partial E \setminus \partial^* E$. Proposition \ref{1st_variation} implies 
that $\Sigma$ is a closed set with    $\Ha^{n-3}(\Sigma ) = 0$. Therefore we may cover 
$\Sigma \cap \overline{B}_{2R_k}$ with  balls $B_{r_i} := B_{r_i}(x_i)$, $i = 1, \dots, N_k$, with radii $r_i \leq 1/2$ 
such that 
\begin{equation*}
\sum_{i=1}^{N_k} r_i^{n-3} \leq  \frac{1}{C_0}  \frac{1}{k}
\end{equation*}
where $C_0 = C_0(2R_k)$ is the constant from the estimate \eqref{upper_est} for the radius $2R_k$. For every ball $B_{2r_i}$ 
we define a cut-off function $\psi_i \in C_0^\infty(B_{2r_i})$ such that $\psi_i  \equiv 1$ in $B_{r_i}$, 
$0\leq \psi_i  \leq 1$ and $|D\psi_i|\leq \frac{2}{r_i} $. Define
\begin{equation*}
\theta_k(x):= \max_i  \psi_i(x), \qquad x \in \R^n. 
\end{equation*} 
Then $\theta_k(x) = 1 $ for   $x \in \cup_i B_{r_i}$,  $\theta_k(x) = 0 $ for $x \neq  \cup_i B_{2r_i}$ and 
it is Lipschitz continuous. We may  estimate its weak tangential gradient on $\partial^* E$  by 
\begin{equation*}
|D_\tau \theta_k(x)| \leq \max_{i} |D_\tau \psi_i(x)|  \leq \left(\sum_{i=1}^{N_k} |D\psi_i(x)|^2\right)^{1/2}
\end{equation*} 
for $\Ha^{n-1}$-almost every $x \in \partial^* E$.  Since $\Sigma \cap B_{2R_k}  \subset   \cup_i B_{r_i} $ the function
\begin{equation*}
\zeta_k = (1- \theta_k) \eta_k
\end{equation*} 
 has compact support on   $\partial^* E$. Note that by \eqref{upper_est} it holds $P(E; B_{2r_i}) \leq C_0  r_i^{n-1}$. 
Hence we have that
\begin{equation*}
\begin{split}
||D_\tau\zeta_k ||_{L_\gamma^2(\partial^*E)}^2 &\leq 2 \int_{\partial^*E} \left(|D_{\tau} \eta_k|^2 + |D_{\tau} \theta_k|^2\right) \, d\gamma(x)\\
&\leq 2 P_\gamma(E; \R^n \setminus B_{R_k})  + 2 \sum_{i=1}^{N_k} \int_{\partial ^* E \cap B_{2r_i}} |D\psi_i|^2\, d\Ha^{n-1}\\
&\leq \frac{2}{k} + 8 \sum_{i=1}^{N_k}r_i^{-2} P(E; B_{2r_i}) \\
&\leq  \frac{2}{k} + 8C_0  \sum_{i=1}^{N_k}r_i^{n-3} \leq \frac{10}{k}.
\end{split}
\end{equation*}
Similarly we conclude  that $||\zeta_k -1||_{L_\gamma^2(\partial^*E)}^2 \to 0$ as $k \to \infty$.
\end{proof}

%%%%%%%%%%%%%%%%%%%%%%%%%%%%%%%%%%%%%%%%%%%%%%%%%%%%%%%%%%%%%%%%%%%%%%%%%%%%%%%%%%%%%%%%%%%%%%%%%%%%%%%%%%%%%%%%%%%%%%%%%%%%
\section{Quantitative estimates}\label{quanti est}

\noindent
In this section we focus on the proof of our main result, as well as on some of its direct consequences. 
The proof of the Main Theorem is divided in several steps. The core of the  proof is step 3 where we
prove that any minimizer of the functional $\F$ is a half-space. In the final part of the proof (step 4) 
we only need to prove that every minimizer has the right volume.

\vspace{4pt}
\begin{proof}[Proof of the Main Theorem]
Since $\beta(E)=\beta(\R^n\setminus E)$, we may restrict ourselves to the case $s\leq0$.
As explained in Section \ref{idea}, we have to prove that the for some $\e$ and $\Lambda$ (only depending on $s$) 
the only minimizers of the functional $\F$ are the half-spaces~$H_{\unit,s}$, $\unit\in\Sf^{n-1}$.
We will show that this is indeed the case when we choose $\e$ and $\Lambda$ as 
\begin{equation}\label{epsilon}
\e = \frac{\sqrt{2\pi}e^{\frac{s^2}{2}}}{6(1+\Lambda^2)}  
\quad\text{ and }\quad
\Lambda = \frac{e^{-\frac{s^2}{2}}}{\sqrt{\pi}\,\phi(s)}.
\end{equation}
By plugging in this value of $\e$ in \eqref{here we are}, we have
\begin{equation}\label{here we are2}
\beta(E)\leq \frac{12}{\sqrt{2\pi}}(\Lambda^2+1) D(E).
\end{equation}
On the other hand, by second order analysis it is  easy to check that the function
\begin{equation*}
g(s):=e^{-\frac{s^2}{2}}+(\sqrt{2\pi}s-\pi)\phi(s)
\end{equation*}
is non-positive in $(-\infty,0]$. Indeed, $g'$ is non-positive and $\lim_{s\rightarrow-\infty}g(s)=0$.
Therefore, 
\begin{equation}\label{s vs lambda}
\Lambda^2  +1 =\frac{e^{-s^2}}{\pi\phi(s)^2} +1 \leq (\sqrt{\pi}-\sqrt{2}s)^2 +1  \leq 2\pi(1+s^2).
\end{equation}
By plugging in this estimate for $\Lambda$ in \eqref{here we are2},
we have \eqref{largo gente} with the  constant $c=12\sqrt{2\pi}$.

\vspace{8pt}
Assume now that $E$ is a minimizer of $\F$ and, without loss of generality,
that its barycenter is in the direction of $- e^{(n)}$, i.e.,  $b(E) = -|b|e^{(n)}$.
We will denote  $H_s=H_{e^{n},s}$ and show that $E = H_s$.  We divide the proof into four steps.

\vspace{4pt} 
\noindent \textbf{Step 1}.
As a first step we prove an upper bound for the quantity $\int_{\partial^* E} \la x, \unit \ra^2 \, d\gamma(x)$, i.e.,
for every $\unit \in \Sf^{n-1}$ it holds
\begin{equation*}
 \int_{\partial^* E}\la x, \unit \ra^2\, d\Ha^{n-1}_\gamma(x) \leq \frac{13}{3}(\Lambda^2+1)e^{-\frac{s^2}{2}}.
\end{equation*}
The proof is similar to the classical Caccioppoli inequality in the theory of elliptic equations.

We begin with few observations. Using $H_s$ as a competitor, the minimality of $E$ implies 
\begin{equation} \label{bound_peri}
P_\gamma(E) \leq  \mathcal{F}(H_s) =  P_\gamma(H_s) + \e\sqrt{\pi/2} |b(H_s)|^2   \leq   \frac{13}{12}e^{\frac{-s^2}{2}}.
\end{equation} 
Let $r$ be such that $\phi(r)= \gamma(E)$. Since $H_r$ maximizes the length of the barycenter
we have by the Gaussian isoperimetric inequality and by \eqref{bound_peri} that   
\[
|b |\leq |b(H_r)| = \frac{1}{\sqrt{2 \pi}} P_\gamma(H_r) \leq \frac{1}{\sqrt{2 \pi}} P_\gamma(E) 
\leq \frac{13}{12\sqrt{2 \pi}}e^{\frac{-s^2}{2}}.
\] 
From our choice of $\e$ in \eqref{epsilon} it follows that 
\begin{equation} \label{bound_bary}
\e |b | \leq \frac{1}{4}.
\end{equation}

Since  $\partial^* E$ is smooth we deduce from the Euler equation \eqref{euler} that for every Lipschitz continuous 
vector field $X : \partial^* E \to \R^n$ with compact support  it holds
\begin{equation} \label{weak_euler}
\int_{\partial^* E} (\div_\tau X  - \la X, x \ra) \, d\Ha^{n-1}_\gamma(x) - \e|b| \int_{\partial^* E}  x_n \la X, \nu \ra \, d\Ha^{n-1}_\gamma(x) 
=  \lambda \int_{\partial^* E}  \la X, \nu \ra \, d\Ha^{n-1}_\gamma(x).
\end{equation}
To obtain \eqref{weak_euler} simply  multiply the Euler equation \eqref{euler} by $\la X, \nu \ra$ and use the divergence 
theorem on hypersurfaces. 

Let $\zeta_k: \partial^* E \to \R$ be the sequence of Lipschitz continuous functions from the proof of 
Lemma~\ref{sing_cap} which have compact support, $0 \leq \zeta_k \leq 1$ and  $\zeta_k \to 1$ in $H_\gamma^1(\partial^* E)$. 
Let us fix  $\unit \in \Sf^{n-1}$ and  choose $X = -\zeta_k^2 x_\unit \unit$ in  \eqref{weak_euler}, where $x_\unit = \la x, \unit \ra$. We use \eqref{bound_bary}, \eqref{weak_euler} and
Young's inequality to get
\begin{equation*}
\begin{split}
\int_{\partial^* E} (x_\unit^2 - (1- \la \nu, \unit \ra^2)) &\zeta_k^2 \, d\gamma(x) -\frac{1}{8} \int_{\partial^* E} (x_\unit^2 + x_n^2)  \zeta_k^2 \, d\gamma(x)\\
&\leq |\lambda|  \int_{\partial^* E} |x_\unit| \zeta_k^2 \, d\gamma(x) + 2 \int_{\partial^* E} \zeta_k |x_\unit|  |D_\tau \zeta_k|  \, d\gamma(x) \\
&\leq  \lambda^2P_\gamma(E) + \frac{1}{2} \int_{\partial^* E} x_\unit^2 \zeta_k^2 \, d\gamma(x) + 4  \int_{\partial^* E}   |D_\tau \zeta_k|^2  \, d\gamma(x).
\end{split}
\end{equation*}
This yields
\[
\frac{3}{8}\int_{\partial^* E} x_\unit^2  \zeta_k^2 \, d\gamma(x) - \frac{1}{8}\int_{\partial^* E} x_n^2  \zeta_k^2 \, d\gamma(x) \leq (\lambda^2 +1)P_\gamma(E)  + 4  \int_{\partial^* E}   |D_\tau \zeta_k|^2  \, d\gamma(x).
\]
Maximizing over $\unit \in \Sf^{n-1}$ gives
\[
\max_{\unit \in \Sf^{n-1}} \left( \frac{1}{4}\int_{\partial^* E} x_\unit^2  \zeta_k^2 \, d\gamma(x) \right) \leq (\lambda^2 +1)P_\gamma(E)  + 4  \int_{\partial^* E}   |D_\tau \zeta_k|^2  \, d\gamma(x).
\]

By letting $k \to \infty$, from the bound $|\lambda| \leq \Lambda$ proved 
in Proposition \ref{1st_variation}, and from \eqref{bound_peri} we deduce 
\begin{equation*}
\max_{\unit \in \Sf^{n-1}} \int_{\partial^* E} \la x, \unit \ra^2 \, d\Ha^{n-1}_\gamma(x) 
\leq 4(\Lambda^2 +1) P_\gamma(E) \leq \frac{13}{3}(\Lambda^2+1)e^{\frac{-s^2}{2}}.
\end{equation*}

\vspace{6pt} 
\noindent \textbf{Step 2}.
In this step we use the previous step and Proposition \ref{2nd_variation} to conclude that for every 
$\varphi \in H_\gamma^1(\partial^* E)$ with $\int_{\partial^* E} \varphi \, d\Ha^{n-1}_\gamma(x) = 0$ it holds
\begin{equation} \label{main_step1}
\int_{\partial^* E} \Bigl(|D_\tau \varphi|^2 - |B_E|^2\varphi^2 - \frac{5}{18}\varphi^2 
- \e |b|  \nu_n \varphi^2\Bigr)\, d\Ha^{n-1}_\gamma(x)  \geq 0.
\end{equation}
Recall that $H_\gamma^1(\partial^* E)$ is the closure of $C_0^\infty(\partial^* E)$ with respect to $H_\gamma^1$-norm. 

Let  $\varphi \in H_\gamma^1(\partial^* E)$ with $\int_{\partial^* E} \varphi \, d\Ha^{n-1}_\gamma(x) = 0$. Then there exists 
$\varphi_k \in C_0^\infty(\partial^* E)$ such that $\varphi_k \to \varphi$ in $H_\gamma^1(\partial^* E)$.
In particular, $\lim_{k \to \infty} \int_{\partial^* E} \varphi_k \, d\Ha^{n-1}_\gamma(x) = 0$. Therefore by slightly changing 
the functions $\varphi_k$ we may assume that they satisfy $\int_{\partial^* E} \varphi_k \, d\Ha^{n-1}_\gamma(x) = 0$ and 
still converge to $\varphi$ in  $H_\gamma^1(\partial^* E)$. Let $\unit_k  \in \Sf^{n-1}$ be  vectors such that 
\begin{equation*}
\Big| \int_{\partial^* E}  \varphi_k \,x \, d\Ha^{n-1}_\gamma(x)\Big|  
= \la \int_{\partial^* E} \varphi_k\, x\, d\Ha^{n-1}_\gamma(x), \unit_k \ra
=  \int_{\partial^* E}  \la x, \unit_k \ra \varphi_k \, d\Ha^{n-1}_\gamma(x).
\end{equation*}
 We use Proposition \ref{2nd_variation} and step 1 to conclude
\begin{equation*}
\begin{split}
\int_{\partial^* E}& \Bigl(|D_\tau \varphi_k|^2 - |B_E|^2\varphi_k^2 - \varphi_k^2 
- \e |b|   \nu_n \varphi_k^2 \Bigr)\, d\Ha^{n-1}_\gamma(x)\\ 
\geq& -\frac{\e}{\sqrt{2\pi}} \left( \int_{\partial^* E} \la x, \unit_k \ra^2 \, d\Ha^{n-1}_\gamma(x)\right)
\left( \int_{\partial^* E} \varphi_k^2 \, d\Ha^{n-1}_\gamma(x)\right)\\
\geq& - \e\,\frac{4}{3\sqrt{2\pi}}(\Lambda^2+1)e^{\frac{-s^2}{2}}   \, \left( \int_{\partial^* E} \varphi_k^2 \, d\Ha^{n-1}_\gamma(x)\right). 
\end{split}
\end{equation*}
From  our choice of $\e$ in  \eqref{epsilon} we conclude that \eqref{main_step1} holds for every $\varphi_k$. 
Since $\varphi_k \to \varphi$ in $H_\gamma^1(\partial^* E)$,  \eqref{main_step1}  
follows by letting $k \to \infty$ and by noticing that  Fatou's lemma implies
\begin{equation*}
\liminf_{k \to \infty} \int_{\partial^* E} |B_E|^2\varphi_k^2\, d\Ha^{n-1}_\gamma(x) \geq \int_{\partial^* E} |B_E|^2\varphi^2\, d\Ha^{n-1}_\gamma(x).
\end{equation*}

Before the next step we remark that by \eqref{main_step1} we have
\begin{equation*}
\int_{\partial^* E}  |B_E|^2\varphi^2 \, d\Ha^{n-1}_\gamma(x) \leq C ||\varphi||^2_{H_\gamma^1(\partial^* E)}
\end{equation*}
for every $\varphi \in H_\gamma^1(\partial^* E)$ with  zero average. Recalling Lemma \ref{sing_cap}  
it is not difficult to see that this implies
\begin{equation} \label{curv_bound}
\int_{\partial^* E}  |B_E|^2\, d\Ha^{n-1}_\gamma(x) < \infty.
\end{equation}
We leave the proof of this estimate  to the reader. 

\vspace{4pt} 
\noindent \textbf{Step 3.}
In this step we will prove that our minimizer $E$ is a  half-space
\begin{equation} \label{main_step2}
E = H_t = \{ x \in \R^n \colon x_n < t \} \quad\text{for some }\; t \in \R.
\end{equation}
This is the main  step of  the proof.

Let $j \in \{1, \dots, n-1\}$. Since we assumed that the barycenter $b(E)$ is in  $- e^{(n)}$ direction, 
the divergence theorem yields 
\begin{equation}\begin{split} \label{test has zero average}
\int_{\partial^* E} &\nu_j\, d\Ha^{n-1}_\gamma(x)
=\frac{1}{(2\pi)^{\frac{n-1}{2}}}\int_E \div( e^{(j)} e^{-\frac{|x|^2}{2}}) \, dx\\
=&-\sqrt{2\pi}\int_E x_j \, d\gamma(x)
=-\sqrt{2\pi}\la b(E), e^{(j)} \ra=0.
\end{split}\end{equation}
In other words, the  function $\nu_j$ has zero average. Moreover \eqref{curv_bound} implies
\begin{equation*}
\int_{\partial^* E}  |D_\tau \nu_j|^2 \, d\Ha^{n-1}_\gamma(x) \leq \int_{\partial^* E}  |B_E|^2 \, d\Ha^{n-1}_\gamma(x) < \infty.
\end{equation*}
From  Lemma \ref{sing_cap} we deduce that $\nu_j \in H_\gamma^1(\partial^* E)$ and we may thus use  \eqref{main_step1} to conclude
\begin{equation} \label{for_nu_j}
\int_{\partial^* E}\Bigl( |D_\tau \nu_j|^2 - |B_E|^2\nu_j^2 - \frac{1}{2}\nu_j^2
- \e |b| \nu_n  \nu_j^2\Bigr) \, d\Ha^{n-1}_\gamma(x)\geq 0.
\end{equation}

Recall the notion  of  tangential derivative $\delta_i$, tangential gradient $D_\tau$ and 
tangential Laplacian $\Delta_\tau$ defined in Section \ref{notation}. 
We recall the well-known equation (see, e.g., \cite[Lemma  10.7]{Giusti})
\begin{equation*}
\Delta_\tau \nu_j = -  |B_E|^2\nu_j + \delta_j \Hh \qquad \text{on }\, \partial^* E.
\end{equation*}
Note also that
\begin{equation*}
\delta_j \langle x, \nu \rangle = \sum_{i=1}^n (\delta_j x_i) \nu_i + (\delta_j \nu_i)x_i = \nu_j -\sum_{i=1}^n \nu_j \nu_i^2 + (\delta_i \nu_j)x_i  = \la D_\tau \nu_j, x \ra,
\end{equation*}
where in the second equality  we  used $\delta_j \nu_i = \delta_i \nu_j$   and 
in the last equality we used $\sum_{i=1}^n \nu_i^2  = |\nu|^2 =1$.  
We differentiate the Euler equation \eqref{euler} with respect to $\delta_j$ and by the two above equations   we deduce that
\begin{equation*}
\Delta_\tau \nu_j - \la D_\tau \nu_j, x \ra= -  |B_E|^2\nu_j  - \e |b | \nu_n \nu_j  \qquad \text{on }\, \partial^* E.
\end{equation*}
The last term follows from $\delta_j x_n = -\nu_j \nu_n$, since $j \neq n$. Let $\zeta_k: \partial^* E \to \R$ be as in step 1.
We multiply the previous equation by $\zeta_k \nu_j$, integrate over $\partial^* E$ 
and use the divergence theorem on hypersurfaces to conclude
\begin{equation*}
\begin{split}
\int_{\partial^* E} \zeta_k &\left(|B_E|^2 \nu_j^2  + \e|b|\nu_n \nu_j^2 \right)\, d\Ha^{n-1}_\gamma(x) 
=-\int_{\partial^* E} \zeta_k \nu_j\left( \Delta_\tau \nu_j - \la D_\tau \nu_j, x \ra \right)\, d\Ha^{n-1}_\gamma(x) \\
&=-\int_{\partial^* E} \zeta_k \nu_j \div_\tau \Bigl( D_\tau \nu_j e^{-\frac{|x|^2}{2}} \Bigr)\, d\Ha^{n-1}(x)\\
&=-\int_{\partial^* E} \div_\tau \Bigl(\zeta_k \nu_j  D_\tau \nu_j e^{-\frac{|x|^2}{2}} \Bigr)\, d\Ha^{n-1}(x) 
+\int_{\partial^* E} \la D_\tau (\zeta_k \nu_j),   D_\tau \nu_j \ra  \, d\Ha^{n-1}_\gamma(x)\\
&=\int_{\partial^* E} \zeta_k | D_\tau \nu_j |^2 \, d\Ha^{n-1}_\gamma(x)
+ \int_{\partial^* E} \nu_j \la D_\tau \zeta_k,D_\tau \nu_j \ra  \, d\Ha^{n-1}_\gamma(x).
\end{split}
\end{equation*}
Since $||D_\tau  \zeta_k||_{L^2(\partial^* E)} \to 0$ as $k \to \infty$, we deduce from the previous equation that
\begin{equation*}
\int_{\partial^* E} \bigl(|B_E|^2 \nu_j^2  +  \e |b| \nu_n \nu_j^2\bigr) \, d\Ha^{n-1}_\gamma(x) 
=\int_{\partial^* E}  | D_\tau \nu_j |^2 \, d\Ha^{n-1}_\gamma(x).
\end{equation*}
Thus we get from \eqref{for_nu_j} that
\begin{equation*}
-\frac{5}{18} \int_{\partial^* E} \nu_j^2 \, d\Ha^{n-1}_\gamma(x)  \geq 0.
\end{equation*}
This implies $\nu_j \equiv 0$ on $\partial^* E$.
Since $E$ has locally finite perimeter in $\R^n$, De Giorgi's structure theorem
 \cite[Theorem 15.9]{Ma} yields
\begin{equation*}
D\chi_E=-\nu\Ha^{n-1}\lfloor\partial^*E.
\end{equation*}
Therefore, the distributional partial derivatives $D_j\chi_E$, $j=1,\ldots, n-1$,
are all zero and necessarily $E=\R^{n-1}\times F$ for some set $F$ of locally
finite perimeter in $\R$. In particular, the topological  boundary of $E$ is smooth and $\partial^* E = \partial E$. 

We will show that the  boundary of $ E$ is connected, which  will imply that    
$E$ is a half-space. To this aim we use the argument from  \cite{StZ}. We argue by contradiction and assume that 
there are  two disjoint closed sets $\Gamma_1, \Gamma_2 \subset \partial E$  such that $\partial E = \Gamma_1 \cup \Gamma_2$. 
Let $a_1 <0 < a_2$ be two numbers such that the function $\varphi: \partial E \to \R$
\begin{equation*}
\varphi := \begin{cases}
a_1,  \quad \text{on }\, \Gamma_1\\
a_2, \quad \text{on }\, \Gamma_2 
\end{cases}
\end{equation*}
has zero average.  Then clearly  $\varphi \in H_\gamma^1(\partial E)$ and, therefore, 
\eqref{main_step1} implies 
\begin{equation*}
\int_{\partial E} \Bigl(|B_E|^2\varphi^2 + \frac{5}{18}\varphi^2 +\e |b|    \nu_n \varphi^2\Bigr) \, d\Ha^{n-1}_\gamma(x)  \leq 0.
\end{equation*}
From  \eqref{bound_bary} we deduce
\begin{equation*}
\int_{\partial E}  \Bigl(|B_E|^2\varphi^2 + \frac{1}{36}\varphi^2\Bigr) \, d\Ha^{n-1}_\gamma(x)  \leq 0
\end{equation*}
which  is obviously  impossible. Hence,   $\partial E$ is connected.

\vspace{4pt} 
\noindent \textbf{Step 4}.
We need  yet to show that $E$ has the correct volume, i.e., $\gamma(E) =\phi(s)$. 
Since we have proved  \eqref{main_step2} we only need to show that the function $f:\R\to (0, \infty)$
\begin{equation*}
f(t) := \mathcal{F} (H_t) = e^{-\frac{t^2}{2}} + \frac{\e}{2\sqrt{2\pi}} e^{-t^2} + \Lambda\sqrt{2\pi}\, \big| \phi(t) - \phi(s) \big|
\end{equation*} 
attains its minimum at $t = s \leq 0$.  

Note that for every $t < 0$ it holds $f(t)< f(|t|)$. Moreover the function $f$ is clearly increasing on $(s,0)$. 
Hence, we only need to show that $f(s) <  f(t) $ for every $t < s$. In $(-\infty,s)$ we have
\begin{equation*}
f'(t) = -t e^{-\frac{t^2}{2}} - \frac{\e}{\sqrt{2\pi}}\, t  e^{-t^2} - \Lambda\, e^{\frac{-t^2}{2}}.
\end{equation*}
It follows from our choices of $\Lambda$ and $\e$ in  \eqref{epsilon} that
$f$ increases, reaches its maximum and decreases to $f(s)$. Moreover, we have
\begin{equation*}
\lim_{t\rightarrow-\infty}f(t)=\Lambda\sqrt{2\pi}\,\phi(s)\geq \sqrt{2}e^{-\frac{s^2}{2}} > f(s).
\end{equation*}
Thus the function $f$ attains its minimum at $t=s$ which implies 
\begin{equation*}
\gamma(E) = \phi(s).
\end{equation*}
This completes the proof.
\end{proof}

\begin{remark} \label{optimal_mass}
We remark that the dependence on the mass  in \eqref{largo gente} is optimal. This can be verified by considering 
the one-dimensional set $E_s = (-\infty, a(s))\cup (-a(s), \infty)$, where $s<0$, and $a(s)<s$ is a number such that
\begin{equation} \label{correct mass}
\frac{2}{\sqrt{2\pi}} \int_{-\infty}^{a(s)} e^{-\frac{t^2}{2}}\, dt 
=\frac{1}{\sqrt{2\pi}} \int_{-\infty}^{s} e^{-\frac{t^2}{2}}\, dt,
\end{equation}
i.e., $\gamma(E_s)= \phi(s)$. Then $b(E_s) = 0$  and $\beta(E_s)= \frac{1}{\sqrt{2 \pi}} e^{-\frac{s^2}{2}}$. 
The sharp mass dependence follows from  
\begin{equation} \label{sharp mass calc}
\liminf_{s \to -\infty} \frac{ D(E_s)}{s^{-2}\,  \beta(E_s)} 
= \frac{1}{\sqrt{2 \pi}} 
\liminf_{s \to -\infty} \frac{2 e^{-\frac{a(s)^2}{2}} - e^{-\frac{s^2}{2}} }{ s^{-2}\, e^{-\frac{s^2}{2}} } 
\leq \sqrt{2/\pi}. 
\end{equation}
For the reader's convenience we will give the calculations below.  
\end{remark}

To show \eqref{sharp mass calc} we write $a(s)= s - \e(s)$.  From \eqref{correct mass} it follows  
that $\e(s) \to 0 $ as $s \to -\infty$. We claim that 
\[
\liminf_{s \to -\infty} \frac{\e'(s)}{s^{-2}} \leq  1.
\]
Indeed, if this were not true then we would have $\e(s)\geq \frac{1}{|s|}$ when  $|s|$ is large. 
Then it follows from \eqref{correct mass} that
\[
\begin{split}
\frac{1}{2} \leq \lim_{s \to -\infty} 
\frac{\int_{-\infty}^{s+1/s} e^{-\frac{t^2}{2}}\, dt}{\int_{-\infty}^{s} e^{-\frac{t^2}{2}}\, dt}
=\lim_{s \to -\infty} \frac{(1-\frac{1}{s^2}) e^{-\frac{(s + 1/s)^2}{2}}}{e^{-\frac{s^2}{2}}} = \frac{1}{e}
\end{split}
\]
which is a contradiction.  By  differentiating \eqref{correct mass} with respect to $s$
and substituting in the left-hand side of \eqref{sharp mass calc} we obtain
\[
\liminf_{s \to -\infty} \frac{2 e^{-\frac{(s -\e(s))^2}{2}} - e^{-\frac{s^2}{2}} }{s^{-2}\, e^{-\frac{s^2}{2}} }   
= \liminf_{s \to -\infty}  \frac{2 \e'(s)\, e^{-\frac{(s -\e(s))^2}{2}}  }{s^{-2}\, e^{-\frac{s^2}{2}} } \leq 2.
\]

\vspace{10pt}
We proceed  by proving that the strong asymmetry controls the square of the standard one. 
Let us introduce a variant of the Fraenkel asymmetry. 
Given a Borel set $E$ with $\gamma(E)=\phi(s)$ we define
\begin{equation*}
\hat{\alpha}(E):=
\begin{cases}
\displaystyle{2 \phi(-|s|) } \hspace{20pt} \text{ if } b(E)=0,\\[4pt]
\displaystyle{\gamma(E\triangle H_{\unit,s})} \;\; \text{ if } b(E)\neq0,
\end{cases}
\end{equation*}
where $\unit =-b(E)/|b(E)|$. Since $\alpha(E) \leq 2 \phi(-|s|) $, then trivially  $\hat{\alpha}(E)\geq\alpha(E)$. 
Compared to the asymmetry $\alpha$, the asymmetry $\hat{\alpha}$ has the advantage  that  
the half-space is chosen to be in the direction of the barycenter. The following estimate can be found in \cite{EL}  
but without explicit constant. We give a proof where we obtain the optimal dependence on the mass.

\begin{proposition}\label{standard vs strong}
Let $E\subset\R^n$ be a set with $\gamma(E)=\phi(s)$. Then
\begin{equation}\label{eq strong vs standard}
\beta(E)\geq \frac{e^{\frac{s^2}{2}}}{4} \,\hat{\alpha}(E)^2.
\end{equation}
\end{proposition}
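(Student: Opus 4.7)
The plan is to split the proof according to whether $b(E)=0$ or not; by replacing $E$ with $\R^n\setminus E$ if necessary we may assume $s\leq 0$.

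When $b(E)=0$, any $\unit$ gives $\beta(E)=b_s=e^{-s^2/2}/\sqrt{2\pi}$, while $\hat{\alpha}(E)=2\phi(-|s|)$ by definition, so \eqref{eq strong vs standard} reduces to $\phi(-|s|)^2\leq e^{-s^2}/\sqrt{2\pi}$. I will derive the sharper pointwise bound $\phi(-t)\leq\frac{1}{2}e^{-t^2/2}$ for $t\geq 0$ by observing that the map $t\mapsto \phi(-t)\,e^{t^2/2}$ has non-positive derivative $e^{t^2/2}(t\phi(-t)-\phi'(t))$, the inequality $t\phi(-t)\leq\phi'(t)$ being Mills' inequality $\int_t^\infty e^{-u^2/2}\,du\leq\int_t^\infty (u/t)\,e^{-u^2/2}\,du$. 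Squaring and using $\sqrt{2\pi}\leq 4$ then settles this case.

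For $b(E)\neq 0$ let $\unit=-b(E)/|b(E)|$, $H=H_{\unit,s}$, and $h=\hat{\alpha}(E)/2=\gamma(E\setminus H)=\gamma(H\setminus E)$. Repeating the computation from section~\ref{idea} I first obtain the pointwise identity
\[
\beta(E)=b_s-|b(E)|=\int_{E\setminus H}(\la x,\unit\ra-s)\,d\gamma(x)+\int_{H\setminus E}(s-\la x,\unit\ra)\,d\gamma(x),
\]
with both integrands non-negative. Next I apply a bathtub/rearrangement principle: the push-forward of $\gamma\lfloor(E\setminus H)$ by $x\mapsto\la x,\unit\ra$ is a measure on $[s,\infty)$ dominated by $\phi'(t)\,dt$ and of total mass $h$, so the non-negative cost $\int(t-s)\,d\mu(t)$ is minimised by concentrating the mass near $s$, i.e.\ by the slab $\{s\leq\la x,\unit\ra\leq s_1\}$ with $\phi(s_1)-\phi(s)=h$; the symmetric statement applies to $H\setminus E$. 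Computing the resulting one-dimensional integrals via $t\phi'(t)=-\phi''(t)$ then yields
\[
\beta(E)\geq F(h):=2\phi'(s)-\phi'(s_1)-\phi'(s_2),
\]
where $s_1\geq s\geq s_2$ are determined by $\phi(s_1)-\phi(s)=\phi(s)-\phi(s_2)=h$.

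The remaining task is to show $F(h)\geq e^{s^2/2}h^2$, which gives \eqref{eq strong vs standard} since $h=\hat{\alpha}(E)/2$. An implicit computation using $\phi''/\phi'=-t$ yields $F(0)=F'(0)=0$ and
\[
F''(h)=\frac{1}{\phi'(s_1)}+\frac{1}{\phi'(s_2)}=\sqrt{2\pi}\bigl(e^{s_1^2/2}+e^{s_2^2/2}\bigr),
\]
so by AM-GM the task reduces to the auxiliary inequality $s_1^2+s_2^2\geq 2s^2$ whenever $\phi(s_1)+\phi(s_2)=2\phi(s)$ with $s_1\geq s\geq s_2$. This inequality is the main obstacle. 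To establish it I set $g(s_1)=s_1^2+s_2(s_1)^2$ along the constraint and compute $g'(s_1)=2\bigl(s_1\phi'(s_2)-s_2\phi'(s_1)\bigr)/\phi'(s_2)$; any critical point of $g$ must then satisfy $s_1/\phi'(s_1)=s_2/\phi'(s_2)$, but the map $t\mapsto t/\phi'(t)$ has derivative $(1+t^2)/\phi'(t)>0$ and is thus strictly increasing on $\R$, forcing $s_1=s_2=s$. A direct second-derivative computation gives $g''(s)=4(1+s^2)>0$, making $s$ a strict local minimum of $g$; the absence of further critical points of $g$ then promotes it to a global minimum, yielding $s_1^2+s_2^2\geq 2s^2$. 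Hence $F''(h)\geq 2\sqrt{2\pi}\,e^{s^2/2}\geq 2e^{s^2/2}$, and integrating twice from $0$ with $F(0)=F'(0)=0$ gives $F(h)\geq e^{s^2/2}h^2$, completing the proof.
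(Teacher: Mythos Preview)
Your proof is correct, but in the main case $b(E)\neq 0$ it takes a noticeably more elaborate route than the paper.

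Both arguments start identically: reduce to $s\le 0$, handle $b(E)=0$ via the bound $\phi(s)\le\tfrac12 e^{-s^2/2}$, and for $b(E)\neq 0$ apply the bathtub principle to replace $E$ by the one-dimensional configuration consisting of the slab $\{s\le x_n< s_1\}$ together with the half-line $\{x_n<s_2\}$, where $\phi(s_1)-\phi(s)=\phi(s)-\phi(s_2)=h:=\hat\alpha(E)/2$. From that point the two proofs diverge. The paper simply \emph{discards} the slab contribution and is left with the single inequality
\[
\frac{1}{\sqrt{2\pi}}\int_{s_2}^{s}(s-x_n)e^{-x_n^2/2}\,dx_n\;\ge\;\frac{e^{s^2/2}}{2}\Bigl(\frac{1}{\sqrt{2\pi}}\int_{s_2}^{s}e^{-x_n^2/2}\,dx_n\Bigr)^{2},
\]
which it proves by a one-line first-derivative check using $e^{-x_n^2/2}\le e^{-s^2/2}$ for $x_n\le s\le 0$. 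You instead keep both terms, differentiate $F(h)=2\phi'(s)-\phi'(s_1)-\phi'(s_2)$ twice in $h$, and reduce to the auxiliary inequality $s_1^2+s_2^2\ge 2s^2$ under the constraint $\phi(s_1)+\phi(s_2)=2\phi(s)$; this you establish by a critical-point analysis using the strict monotonicity of $t\mapsto t/\phi'(t)$.

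The trade-off is clear. The paper's argument is shorter and uses only first-order monotonicity, at the cost of throwing away half the available gain. Your approach recovers the full expression and in fact yields the sharper intermediate bound $F(h)\ge\sqrt{2\pi}\,e^{s^2/2}h^2$ (before you discard the factor $\sqrt{2\pi}$), roughly twice the paper's constant $\sqrt{\pi/2}$; but it requires the extra convexity lemma, whose proof, while elementary, is the longest step of your argument. Since the proposition only asks for the constant $1/4$, the paper's one-sided estimate already suffices with room to spare.
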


\begin{proof}
Since $\hat{\alpha}(E)=\hat{\alpha}(\R^n\setminus E)$ we may restrict ourselves to the case $s \leq 0$. 
By first order analysis it is easy to check that the function
\begin{equation*}
f(s):=e^{-\frac{s^2}{2}}-\sqrt{\frac{2}{\pi}}\int_{-\infty}^s e^{-\frac{x_n^2}{2}}dx_n
\end{equation*}
is non-negative in $(-\infty,0]$ or, equivalently, that $e^{-\frac{s^2}{2}}\geq 2\phi(s)$.
Therefore, if $b(E)=0$ we immediately have
\begin{equation*}
\beta(E)=b_s=\frac{e^{-\frac{s^2}{2}}}{\sqrt{2\pi}}\geq\frac{e^{\frac{s^2}{2}}}{\sqrt{2\pi}}\, \hat{\alpha}(E)^2.
\end{equation*}

\vspace{4pt}
Assume now that $b(E)\neq0$ and, without loss of generality, that $e^{(n)}=-b(E)/|b(E)|$.
For simplicity we  write $H=H_{e^{(n)},s}$. Let $a_1$ and $a_2$ be positive numbers such that 
\begin{equation*}
 \gamma(E\setminus H) =\frac{1}{\sqrt{2\pi}} \int_{s-a_1}^s e^{-\frac{x_n^2}{2}} dx_n =  \frac{1}{\sqrt{2\pi}} \int_s^{s+a_2} e^{-\frac{x_n^2}{2}} dx_n.
\end{equation*}
Consider  the sets $E^+:=E\setminus H$, $E^-:=E\cap H$,
$F^+:=\R^{n-1}\times[s,s+a_2)$, $F^-:=\R^{n-1}\times(-\infty,s-a_1)$, and $F:=F^+\cup F^-$.
By construction $\gamma(F)=\phi(s)$, $\gamma(F^+)=\gamma(E^+)$, and $\gamma(F^-)=\gamma(E^-)$.
We have 
\begin{equation*}\begin{split}
\beta(E)-\beta(F)=&\int_E x_n d\gamma(x)-\int_F x_n d\gamma(x)\\
=&\int_{E^+\setminus F^+} (x_n- s-a_2) d\gamma(x)+\int_{F^+\setminus E^+} (-x_n+s+ a_2) d\gamma(x)\\
&+\int_{E^-\setminus F^-} (x_n-s+a_1) d\gamma(x)+\int_{F^-\setminus E^-} (-x_n+s-a_1) d\gamma(x)
\geq0,
\end{split}\end{equation*}
because the integrands in the last term are all positive.

Since $\gamma(E \setminus H) = \gamma(H\setminus E)$ it is sufficient  to show that 
$\beta(F)\geq  e^{\frac{s^2}{2}}\,\gamma(E \setminus H)^2$. 
By first order analysis it is  easy to check that for a fixed $s\leq 0$ the function
\begin{equation*}
g(t):= \int_{s-t}^s (-x_n +s) \,  e^{-\frac{x_n^2}{2}}\, dx_n   
- \frac{e^{\frac{s^2}{2}}}{2}\left(\int_{s-t}^s e^{-\frac{x_n^2}{2}}\, dx_n\right)^2
\end{equation*}
is non-negative in $[0,\infty)$. Indeed, $g'$ is non-negative and $g(0)=0$. By rearranging  terms as above we deduce
\begin{equation*}\begin{split}
\beta(F)&=\int_F x_n d\gamma(x)-\int_H x_n d\gamma(x)\\
&=\frac{1}{(2\pi)^{\frac{n}{2}}}\int_{\R^{n-1}} e^{-\frac{|x'|^2}{2}}dx' 
\left( \int_{s-a_1}^s (-x_n + s) \, e^{-\frac{x_n^2}{2}}dx_n +  \int_s^{s+a_2} (x_n -s) \, e^{-\frac{x_n^2}{2}}dx_n \right) \\
&\geq\frac{1}{\sqrt{2 \pi}} \int_{s-a_1}^s (-x_n+s) e^{-\frac{x_n^2}{2}}dx_n 
\geq  \frac{e^{\frac{s^2}{2}}}{2\sqrt{2\pi}}\left(\int_{s-a_1}^s e^{-\frac{x_n^2}{2}}\, dx_n\right)^2\\
&=\sqrt{\frac{\pi}{2}}\,  e^{\frac{s^2}{2}} \gamma(E \setminus H)^2.
\end{split}\end{equation*}
\end{proof}

\vspace{6pt}
By the Main Theorem and Proposition \ref{standard vs strong} we immediately conclude that
the deficit controls the Fraenkel asymmetry.
\begin{corollary}
There exists an absolute constant $c$ such that for every $s \in \R$ and for every set $E\subset\R^n$  with $\gamma(E)=\phi(s)$ 
the following estimate holds:
\begin{equation}\label{quanti standard new}
\hat{\alpha}(E)^2\leq c\,(1+s^2)e^{-\frac{s^2}{2}} D(E).
\end{equation}
\end{corollary}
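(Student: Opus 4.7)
The plan is to obtain the corollary as an immediate consequence of the two preceding results: the Main Theorem controlling the strong asymmetry $\beta(E)$ by the deficit $D(E)$, and Proposition \ref{standard vs strong} controlling the (modified) Fraenkel asymmetry $\hat{\alpha}(E)$ by $\beta(E)$. So first I would simply chain the two inequalities.

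Concretely, since the Main Theorem yields an absolute constant $c_0$ such that
\begin{equation*}
\beta(E) \leq c_0\,(1+s^2)\, D(E)
\end{equation*}
for every $s \in \R$ and every set $E \subset \R^n$ with $\gamma(E) = \phi(s)$, and since Proposition \ref{standard vs strong} gives
\begin{equation*}
\beta(E) \geq \frac{e^{s^2/2}}{4}\, \hat{\alpha}(E)^2,
\end{equation*}
I would combine them and multiply through by $4 e^{-s^2/2}$ to obtain
\begin{equation*}
\hat{\alpha}(E)^2 \leq 4 e^{-s^2/2}\, \beta(E) \leq 4 c_0\,(1+s^2)\, e^{-s^2/2}\, D(E),
\end{equation*}
which is the claim with $c = 4 c_0$. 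Using the explicit constant $c_0 = 80\pi^2\sqrt{2\pi}$ produced by the Main Theorem one may take $c = 320\pi^2\sqrt{2\pi}$.

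There is essentially no obstacle here: both ingredients have already been established in the paper, and the corollary is a one-line combination. The only mild point to mention is that the chain of inequalities is valid without further assumption on $b(E)$, since Proposition \ref{standard vs strong} covers both the case $b(E) = 0$ (where $\hat{\alpha}(E) = 2\phi(-|s|)$) and the case $b(E) \neq 0$ (where $\hat{\alpha}(E) = \gamma(E \triangle H_{\omega,s})$ with $\omega = -b(E)/|b(E)|$) uniformly. Thus no case distinction is needed in the proof of the corollary.
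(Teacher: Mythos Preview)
Your proposal is correct and matches the paper's approach exactly: the paper states the corollary without proof, noting only that it follows immediately from the Main Theorem and Proposition~\ref{standard vs strong}, which is precisely the chaining you carry out.
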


\begin{remark}
The reduction to the set $F$ in Proposition \ref{standard vs strong} gives in particular that the 
dependence on the mass in \eqref{eq strong vs standard} is optimal.
We note that even though the dependence on the mass in \eqref{largo gente} and in \eqref{eq strong vs standard} 
are  optimal, we do not know if these together provide the optimal mass dependence for~\eqref{quanti standard new}.  
\end{remark}

\vspace{10pt}
Given a set $E$ of finite Gaussian perimeter, the \emph{excess} of $E$ is defined as
\begin{equation}\label{osci1}
\Os(E):=\min_{\unit \in\Sf^{n-1}}\left\{  
\int_{\partial^*E}|\nu^E- \unit|^2 \, d\Ha^{n-1}_\gamma(x)\right\}.
\end{equation}
We conclude by proving that the isoperimetric deficit controls the excess of the set.
\begin{corollary} \label{osci2}
There exists an absolute constant $c$ such that for every $s \in \R$ and for every set 
of finite  Gaussian perimeter $E\subset\R^n$  with $\gamma(E)=\phi(s)$ 
the following estimate holds:
\begin{equation}\label{osci quanti}
\Os(E)\leq c\,(1+s^2) D(E).
\end{equation}
Moreover, if $b(E)\neq0$, the minimum in \eqref{osci1} is attained by $\unit =-b(E)/|b(E)|$.
\end{corollary}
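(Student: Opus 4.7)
The plan is to establish the explicit identity $\mathcal{E}(E) = 2D(E) + 2\sqrt{2\pi}\,\beta(E)$ already announced in the introduction, and then simply invoke the Main Theorem to absorb the $\beta$-term.

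First I would expand the integrand as $|\nu^E - \omega|^2 = 2 - 2\langle \nu^E, \omega\rangle$, using $|\nu^E|=|\omega|=1$, so that
\begin{equation*}
\int_{\partial^*E} |\nu^E - \omega|^2 \, d\Ha^{n-1}_\gamma(x) = 2 P_\gamma(E) - 2 \int_{\partial^*E} \langle \nu^E, \omega\rangle \, d\Ha^{n-1}_\gamma(x).
\end{equation*}
The next step is to compute the boundary integral by applying the divergence theorem to the vector field $X(x) = \omega\, e^{-|x|^2/2}$, whose divergence is $-\langle x,\omega\rangle e^{-|x|^2/2}$. Taking care of the $(2\pi)$-normalizations in $d\gamma$ and $d\Ha^{n-1}_\gamma$, this gives the clean identity
\begin{equation*}
\int_{\partial^*E} \langle \nu^E, \omega\rangle \, d\Ha^{n-1}_\gamma(x) = -\sqrt{2\pi}\,\langle b(E), \omega\rangle.
\end{equation*}

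With this in hand, minimizing over $\omega \in \Sf^{n-1}$ is immediate: we must maximize $\langle b(E), \omega\rangle$ in absolute value against the sign, so when $b(E)\neq 0$ the minimum is attained exactly at $\omega = -b(E)/|b(E)|$, yielding
\begin{equation*}
\Os(E) = 2 P_\gamma(E) - 2\sqrt{2\pi}\,|b(E)|.
\end{equation*}
(When $b(E)=0$, any $\omega$ realizes the value $2P_\gamma(E)$, and the statement about the minimizer direction is vacuous.) Then substituting $P_\gamma(E) = D(E) + e^{-s^2/2}$ and $|b(E)| = b_s - \beta(E) = e^{-s^2/2}/\sqrt{2\pi} - \beta(E)$, the constants cancel and we obtain the identity $\Os(E) = 2D(E) + 2\sqrt{2\pi}\,\beta(E)$.

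Finally, the Main Theorem furnishes $\beta(E) \leq c(1+s^2)\,D(E)$ with an absolute $c$, so
\begin{equation*}
\Os(E) = 2D(E) + 2\sqrt{2\pi}\,\beta(E) \leq \bigl(2 + 2\sqrt{2\pi}\,c\bigr)(1+s^2)\, D(E),
\end{equation*}
which gives \eqref{osci quanti} with a redefined absolute constant. There is no real obstacle here: the only subtle point is bookkeeping the $(2\pi)^{n/2}$ versus $(2\pi)^{(n-1)/2}$ factors when passing between $d\gamma$ and $d\Ha^{n-1}_\gamma$ via the divergence theorem, but this is a purely computational check.
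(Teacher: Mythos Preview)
Your proposal is correct and follows essentially the same argument as the paper: both expand $|\nu^E-\omega|^2$, use the divergence theorem on $\omega\,e^{-|x|^2/2}$ to relate the boundary integral to $\langle b(E),\omega\rangle$, minimize over $\omega$ to obtain $\Os(E)=2P_\gamma(E)-2\sqrt{2\pi}\,|b(E)|=2D(E)+2\sqrt{2\pi}\,\beta(E)$, and then invoke the Main Theorem. The only cosmetic difference is that the paper starts from $\langle b(E),\omega\rangle$ and derives the excess expression, whereas you start from the excess and derive the barycenter identity.
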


\begin{proof}
By the divergence theorem
\begin{equation*}\begin{split}
\langle&b(E), \unit \rangle
=\frac{1}{(2\pi)^{\frac{n}{2}}}\int_E\langle x,\unit\rangle e^{-\frac{|x|^2}{2}} dx\\
&=-\frac{1}{(2\pi)^{\frac{n}{2}}}\int_E\div\Bigl(e^{-\frac{|x|^2}{2}}\unit\Bigr) dx
=-\frac{1}{(2\pi)^{\frac{n}{2}}}\int_{\partial^*E}\langle \unit,\nu^E \rangle e^{-\frac{|x|^2}{2}} d\Ha^{n-1}(x)\\
&=\frac{1}{2 \sqrt{2\pi}}\int_{\partial^*E}|\unit-\nu^E|^2 \,  d\Ha^{n-1}_\gamma(x)
-\frac{1}{\sqrt{2\pi}}\int_{\partial^*E}\,   d\Ha^{n-1}_\gamma(x).
\end{split}\end{equation*}
By minimizing over  $\unit \in\Sf^{n-1}$ we get
\begin{equation*}
\Os(E)=2P_\gamma(E)-2\sqrt{2\pi}|b(E)|=2D(E)+2\sqrt{2\pi}\beta(E).
\end{equation*}
Finally, thanks to the estimate \eqref{largo gente}, we obtain \eqref{osci quanti}.
\end{proof}

%%%%%%%%%%%%%%%%%%%%%%%%%%%%%%%%%%%%%%%%%%%%%%%%%%%%%%%%%%%%%%%%%%%%%%%%%%%%%
%%%%%%%%%%%%%%%%%%RINGRAZIAMENTI
\vspace{4pt}
\centerline{\textsc{\large{Acknowledgements}}}

\vspace{4pt}
\noindent
The work was supported by the  FiDiPro project ''Quantitative Isoperimetric Inequalities'' 
and the Academy of Finland grant 268393.

\vspace{4pt}

%%%%%%%%%%%%%%%%%%%%%%%%%%%%%%%%%%%%%%%%%%%%%%%%%%%%%%%%%%%%%%%%%%%%%%%%%%%%%%%%%%%%%%%%%%%%%%%%%%%%%%%
\begin{thebibliography}{30}

\bibitem{AFM}
E. Acerbi, N. Fusco \& M. Morini.
Minimality via second variation for a nonlocal isoperimetric problem.
{\em Comm. Math. Phys.} 322, 515--557 (2013).

\bibitem{AFP}
L. Ambrosio, N. Fusco \& D. Pallara.
{\em Functions of bounded variation and free discontinuity problems},
in the {\em Oxford Mathematical Monographs}.
The Clarendon Press Oxford University Press, New York (2000).

\bibitem{BL}
D. Bakry \& M. Ledoux.
L\'{e}vy-Gromov isoperimetric inequality for an infinite dimensional
diffusion generator.
{\em Invent. Math.} 123, 259--281 (1995).

\bibitem{Bob}
S. G. Bobkov.
An isoperimetric inequality on the discrete cube, and an elementary proof of
the isoperimetric inequality in Gauss space.
{\em Ann. Probab.} 25(1), 206--214  (1997).

\bibitem{BDF}
V. B\"ogelein, F. Duzaar \& N. Fusco.
A quantitative isoperimetric inequality on the sphere. 
To apppear in {\em Adv. Calc. Var.}

\bibitem{BDS}
V. B\"ogelein, F. Duzaar \& C. Scheven.
A sharp quantitative isoperimetric inequality in hyperbolic $n$-space. 
{\em Calc. Var. Partial Differential Equations} 54, 3967--4017 (2015).

\bibitem{Bor}
C. Borell.
The Brunn-Minkowski inequality in Gauss space.
{\em Invent. Math.}  30(2), 207--216 (1975).

\bibitem{BDV}
L. Brasco, G. De Philippis \& B. Velichkov.
Faber-Krahn inequalities in sharp quantitative form.
{\em Duke Math. J.} 164, 1777--1831 (2015).

\bibitem{CK}
E. A. Carlen \& C. Kerce.
On the cases of equality in Bobkov's inequality and Gaussian rearrangement.
{\em Calc. Var. Partial Differential Equations} 13, 1--18 (2001).

\bibitem{CFMP}
A. Cianchi, N. Fusco, F. Maggi \& A. Pratelli.
On the isoperimetric deficit in Gauss space.
{\em Amer. J. Math.} 133, 131--186 (2011).

\bibitem{CL}
M. Cicalese \& G. Leonardi.
A selection principle for the sharp quantitative isoperimetric inequality.
{\em Arch. for Ration. Mech. and Anal.} 206, 617--643 (2012).

\bibitem{Eh1}
A. Ehrhard.
Sym\'{e}trisation dans l'espace de Gauss.
{\em Math. Scand.} 53(2), 281--301 (1983).

\bibitem{EL}
R. Eldan.
A two-sided estimate for the Gaussian noise stability deficit.
{\em Invent. Math.} 201, 561--624 (2015).

\bibitem{FigMP}
A. Figalli, F. Maggi \& A. Pratelli.
A mass transportation approach to quantitative isoperimetric inequalities.
{\em Invent. Math.} 182, 167--211 (2010).

\bibitem{FJ}
N. Fusco \& V. Julin.
A strong form of the quantitative isoperimetric inequality.
{\em Calc. Var. Partial Differential Equations} 50, 925--937 (2014).

\bibitem{FMP}
N. Fusco, F. Maggi \& A. Pratelli.
The sharp quantitative isoperimetric inequality.
{\em Ann. of Math.} 168, 941--980 (2008).

\bibitem{Giusti}
E. Giusti.
{\em Minimal Surfaces and Functions of Bounded Variations.}
Birkh\"auser  (1994).

\bibitem{Ma}
F. Maggi.
{\em Sets of finite perimeter and geometric variational problems. An introduction to geometric measure theory}.
Cambridge Studies in Advanced Mathematics, 135. Cambridge University Press, Cambridge (2012).

\bibitem{MR}
M. Mcgonagle \&  J. Ross.
The hyperplane is the only stable, smooth solution to the Isoperimetric Problem in  Gaussian space.
{\em Geometriae Dedicata} 178, 277--296 (2015).

\bibitem{MN}
E. Mossel \& J. Neeman.
Robust Dimension Free Isoperimetry in Gaussian Space.
{\em Ann. Probab.} 43, 971--991 (2015).

\bibitem{MN2}
E. Mossel \& J. Neeman.
Robust optimality of Gaussian noise stability.
{\em J. Eur. Math. Soc.} 17, 433--482 (2015).

\bibitem{MDO}
E. Mossel, R. O'Donnell \& K. Oleszkiewicz.
Noise stability of functions with low influences: invariance and optimality.
{\em Ann. of Math.}  171, 295--341  (2010).

\bibitem{Ro}
C. Rosales.
Isoperimetric and stable sets for log-concave perturbatios of Gaussian measures.
{\em Anal. Geom. Metr. Spaces} 2, 2299--3274 (2014).

\bibitem{StZ}
P. Sternberg \& K.  Zumbrun.
A Poincar\'e inequality with applications to volume-constrained area-minimizing surfaces.
{\em  J. Reine Angew. Math.}  503, 63--85 (1998).

\bibitem{SuCi}
V. N. Sudakov \& B. S. Tsirelson.
Extremal properties of half-spaces for spherically invariant measures.
{\em Zap. Nau\v{c}n. Sem. Leningrad. Otdel. Mat. Inst. Steklov. (LOMI).} 41:14--24, 165 (1974).
Problems in the theory of probability distributions, II.

\bibitem{Ta}
I.~Tamanini.
Regularity results for almost minimal oriented hypersurfaces in $\R^n$. Quaderni del Dipartimento
di Matematica dell'Universit\`{a} di Lecce, Lecce 1984. Available at cvgmt.sns.it/paper/1807/

\bibitem{Te}
G. Teschl.
{\em Ordinary differential equations and dynamical systems.}
Graduate Studies in Mathematics, 140. American Mathematical Society, Providence, RI (2012).

\bibitem{Vi}
C. Villani.
{\em Optimal transport. Old and new.}
 Grundlehren der Mathematischen Wissenschaften,  338. Springer-Verlag, Berlin (2009).

\end {thebibliography}
\end{document}